\newtheorem{proposition}{Proposition}
\newtheorem{conjecture}{Conjecture}
\title{
Efficient implementations of the modified Gram-Schmidt orthogonalization with a non-standard inner product
}
\author[1]{Akira Imakura}
\author[2]{Yusaku Yamamoto}
\affil[1]{University of Tsukuba, Japan}
\affil[2]{The University of Electro-Communications, Japan}
\email{imakura@cs.tsukuba.ac.jp}
\begin{document}
\maketitle
\thispagestyle{titlepage}

\begin{abstract}
The modified Gram-Schmidt (MGS) orthogonalization is one of the most well-used algorithms for computing the thin QR factorization.
MGS can be straightforwardly extended to a non-standard inner product with respect to a symmetric positive definite matrix $A$.
For the thin QR factorization of an $m \times n$ matrix with the non-standard inner product, a naive implementation of MGS requires $2n$ matrix-vector multiplications (MV) with respect to $A$.
In this paper, we propose $n$-MV implementations: a high accuracy (HA) type and a high performance (HP) type, of MGS.
We also provide error bounds of the HA-type implementation.
Numerical experiments and analysis indicate that the proposed implementations have competitive advantages over the naive implementation in terms of both computational cost and accuracy.
\end{abstract}

\section{Introduction}
In this paper, we consider computing the thin QR factorization with a non-standard inner product of the form
\begin{equation}
        Z = QR, \quad
        Q^{\rm T} A Q = I_n,
        \label{eq:oqr}
\end{equation}
where $Z, Q \in \mathbb{R}^{m \times n}$ $(m \geq n), R \in \mathbb{R}^{n \times n}$ and $A \in \mathbb{R}^{m \times m}$ is symmetric positive definite (spd).
This type of QR factorization with a non-standard inner product \eqref{eq:oqr} appears in weighted least squares problems \cite{Bjorck:1996, Gulliksson:1995}, projection methods for solving symmetric generalized eigenvalue problems \cite{Knyazev:2001,Imakura:2017}, the weighted (block) GMRES and FOM methods \cite{Essai:1998,Imakura:2013} and so on.
\par
For the standard inner product, i.e., $A = I_m$, there are several established algorithms for computing the thin QR factorization \cite{Trefethen:1997, Bjorck:1996}.
These methods can be classified into two groups: orthogonal triangularization methods such as the Householder transformation and triangular orthogonalization methods such as the Gram-Schmidt orthogonalization and the Cholesky QR algorithm.
An extension of the Householder transformation for a quasimatrix has been developed by Trefethen \cite{Trefethen:2009} and it was shown to be applicable to \eqref{eq:oqr} \cite{Yanagisawa:2014}.
However, Trefethen's Householder-type QR algorithm for \eqref{eq:oqr} requires some $A$-orthonormal basis that is a big issue to use it for general $A$.
In contrast, the methods in the second group can be straightforwardly extended to a non-standard inner product.
The error bounds of these methods are also well analyzed in \cite{Rozloznik:2012, Lowery:2014, Yamamoto:2016}.
\par
Here, we focus on the modified Gram-Schmidt (MGS) orthogonalization.
For a standard inner product, the number of floating-point operations (flops) of MGS is $2mn^2$.
For the non-standard inner product, naive implementations of MGS additionally require $2n$ matrix-vector multiplications (MV) with respect to $A$ \cite{Stewart:2001}, which is the most-time consuming part for general $A$.
\par
In this paper, we aim to reduce the computational cost of MGS.
We propose high accuracy (HA) type and high performance (HP) type implementations of MGS that require only $n$ MV.
We also provide error bounds of the HA-type implementation.
One can also apply the proposed concept to the classical Gram-Schmidt (CGS) orthogonalization for its $n$-MV implementations.
\par
The remainder of this paper is organized as follows.
In Section~\ref{sec:propose}, we estimate the minimal computational cost for MGS and propose efficient implementations of MGS.
We present error bounds of the proposed implementation in Section~\ref{sec:error}.
Numerical results are reported in Section~\ref{sec:experiment}.
Section~\ref{sec:conclusion} concludes the paper.
\par
Throughout, the following notations are used.
Let $A \in \mathbb{R}^{m \times m}$ be spd and ${\bm x}, {\bm y} \in \mathbb{R}^m$.
Then, the $A$-inner product of vectors ${\bm x}$ and ${\bm y}$ is defined as $({\bm x}, {\bm y})_A := {\bm x}^{\rm T} A {\bm y}$.
Also, $\| {\bm x} \|_A := \sqrt{({\bm x}, {\bm x})_A} = \sqrt{{\bm x}^{\rm T}A{\bm x}}$ is the corresponding $A$-norm.
Norms without a subscript denote the 2-norm: $\|{\bm x}\| := \|{\bm x}\|_2$ and $\|A\| := \|A\|_2$.
Frobenius norm of a matrix $A$ is denoted by~$\|A\|_{\rm F}$.
For $Z = [{\bm z}_1, {\bm z}_2, \ldots, {\bm z}_n] \in \mathbb{R}^{m \times n}$, we define the range space of the matrix $Z$ by $\mathcal{R}(Z) := {\rm span}\{ {\bm z}_1, {\bm z}_2,$ $ \dots, {\bm z}_n \}$.
If $Z$ is of full column rank, then $\kappa(Z) := \sigma_1 / \sigma_n$ is the condition number of $Z$, where $\sigma_1, \sigma_n$ are the largest and smallest non-zero singular values of~$Z$.
\section{Efficient implementations of MGS}
\label{sec:propose}
\begin{algorithm}[t]
\caption{MGS(col): The column-oriented MGS}
\label{alg:mgs-col}
\begin{minipage}{0.5\hsize}
\begin{algorithmic}[1]
        \FOR{$j = 1, 2, \dots, n$}
        \FOR{$i = 1, 2, \dots, j-1$}
        \STATE $r_{ij} = ({\bm q}_i, {\bm z}_j^{(i-1)})_A$
        \STATE ${\bm z}_j^{(i)} = {\bm z}_j^{(i-1)} - r_{ij} {\bm q}_i$
        \ENDFOR
        \STATE $r_{jj} = \|{\bm z}_j^{(j-1)}\|_A$
        \STATE ${\bm q}_j = {\bm z}_j^{(j-1)} / r_{jj}$
        \ENDFOR  
\end{algorithmic}
\end{minipage}
\begin{minipage}{0.5\hsize}
\centering
\includegraphics[bb = 0 28 308 342, scale=0.3]{./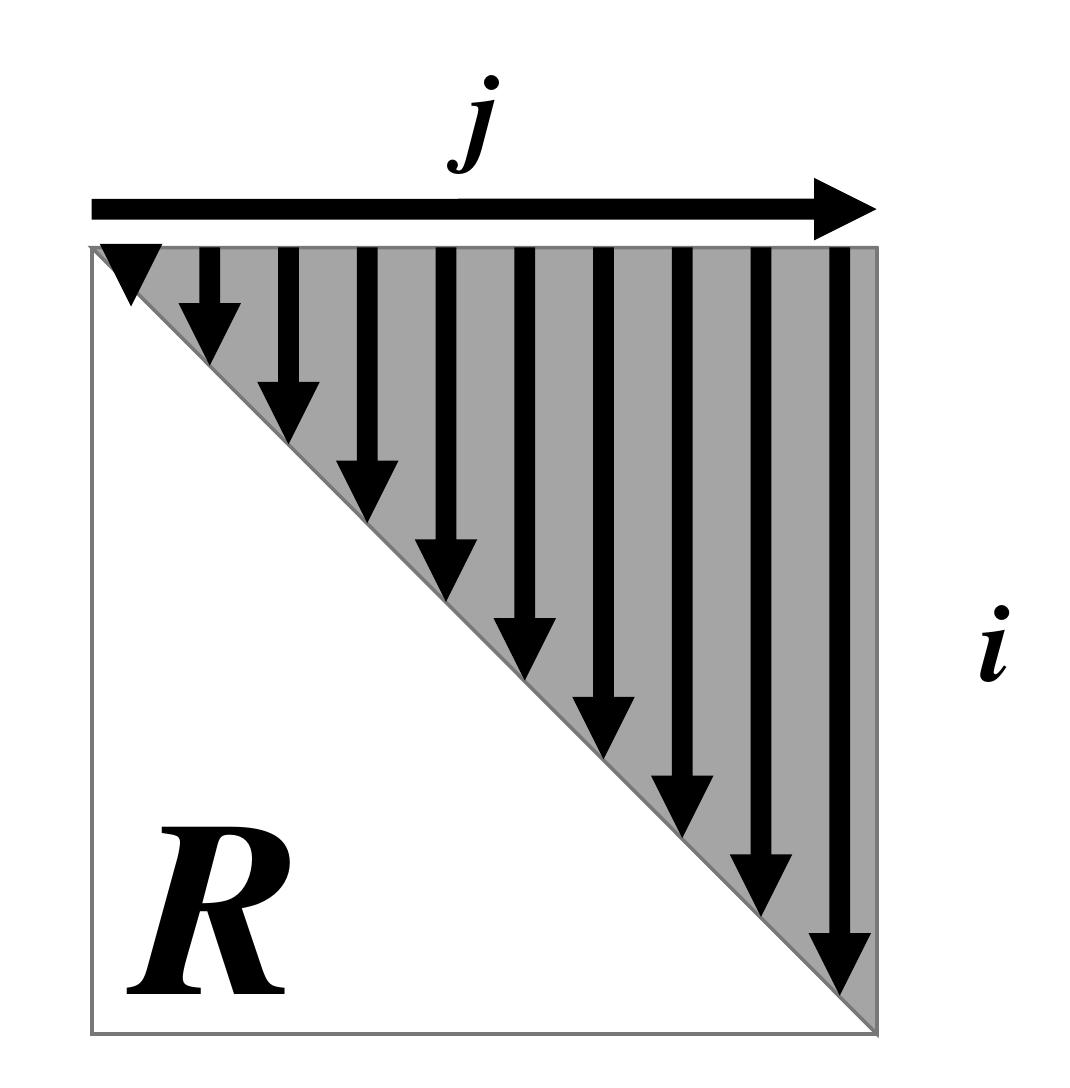} \\
Computing order of $r_{ij}$.
\end{minipage}
\end{algorithm}
\begin{algorithm}[t]
\caption{MGS(row): The row-oriented MGS}
\label{alg:mgs-row}
\begin{minipage}{0.5\hsize}
\begin{algorithmic}[1]
        \FOR{$i = 1, 2, \dots, n$}
        \STATE $r_{ii} = \| {\bm z}_i^{(i-1)} \|_A$
        \STATE ${\bm q}_i = {\bm z}_i^{(i-1)} / r_{ii}$
        \FOR{$j = i+1, i+2, \dots, n$}
        \STATE $r_{ij} = ({\bm q}_i, {\bm z}_j^{(i-1)})_A$
        \STATE ${\bm z}_j^{(i)} = {\bm z}_j^{(i-1)} - r_{ij} {\bm q}_i$
        \ENDFOR
        \ENDFOR  
\end{algorithmic}
\end{minipage}
\begin{minipage}{0.5\hsize}
\centering
\includegraphics[bb = 0 28 308 342, scale=0.3]{./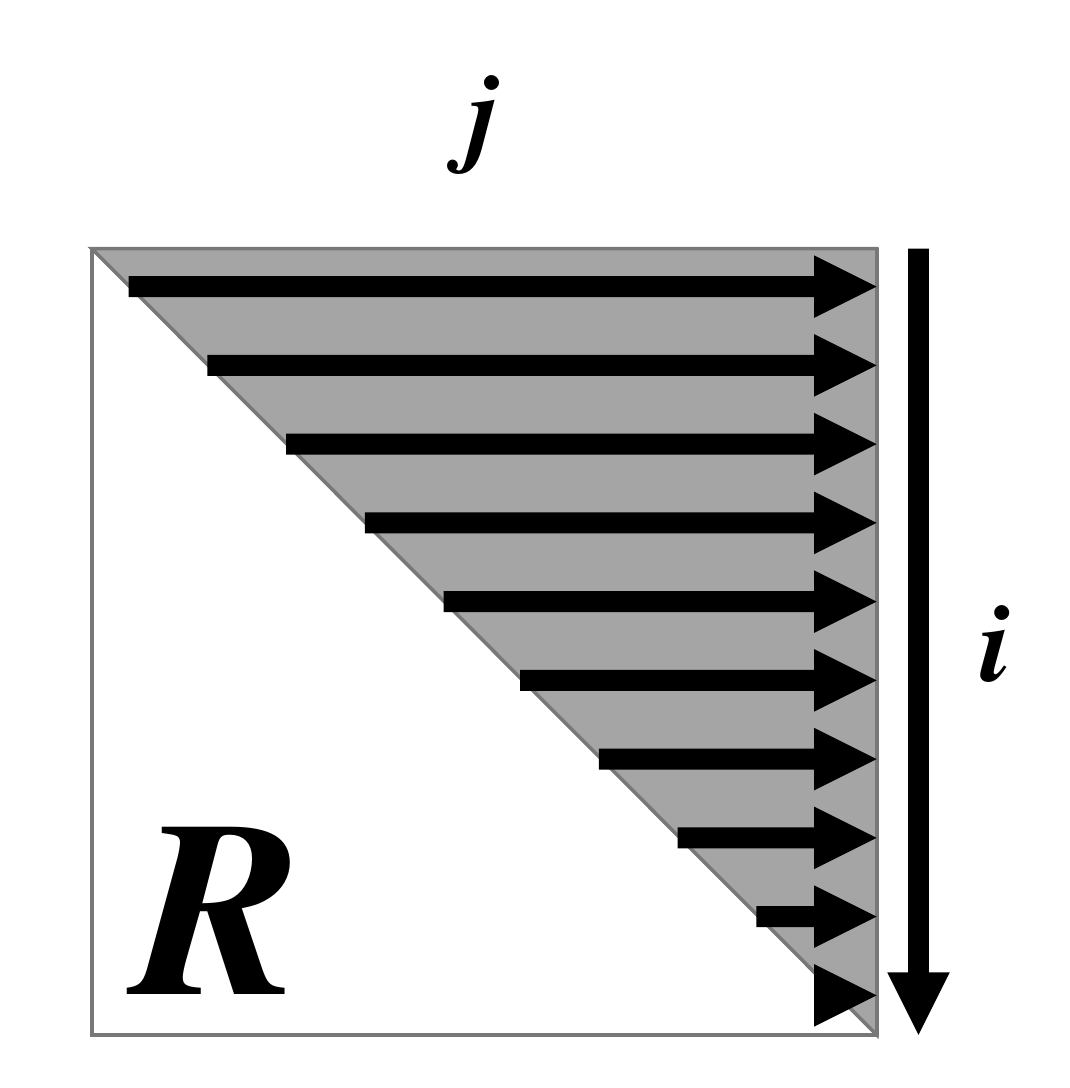} \\
Computing order of $r_{ij}$.
\end{minipage}
\end{algorithm}
There are two types of implementations of MGS: a column-oriented (left-looking) version and a row-oriented (right-looking) version; see Algorithms~\ref{alg:mgs-col} and \ref{alg:mgs-row} \cite{Bjorck:1996}.
In this section, we firstly introduce naive implementations with $2n$ MV.
Then, we estimate the minimal computational cost for MGS and propose efficient implementations of MGS.
\subsection{Naive implementations with $2n$ MV}
\begin{algorithm}[t]
\caption{MGS-naive(col): The naive implementation of the column-oriented MGS}
\label{alg:mgs-naive-col}
\begin{algorithmic}[1]
        \REQUIRE $Z = [{\bm z}_1^{(0)}, {\bm z}_2^{(0)}, \dots, {\bm z}_n^{(0)}] \in \mathbb{R}^{m \times n}, A \in \mathbb{R}^{m \times m}$, where $A$ is spd.
        \ENSURE  $Q = [{\bm q}_1, {\bm q}_2, \dots, {\bm q}_n] \in \mathbb{R}^{m \times n}, R = \{ r_{ij} \}_{1 \leq i,j \leq n} \in \mathbb{R}^{n \times n}$
        \FOR{$j = 1, 2, \dots, n$}
        \FOR{$i = 1, 2, \dots, j-1$}
        \STATE $r_{ij} = {\bm p}_i^{\rm T} {\bm z}_j^{(i-1)}$
        \STATE ${\bm z}_j^{(i)} = {\bm z}_j^{(i-1)} - r_{ij} {\bm q}_i$
        \ENDFOR
        \STATE \fbox{ ${\bm x}_j^{(j-1)} = A {\bm z}_j^{(j-1)}$ }
        \STATE $r_{jj} = \sqrt{ ({\bm z}_j^{(j-1)})^{\rm T} {\bm x}_j^{(j-1)} }$
        \STATE ${\bm q}_j = {\bm z}_j^{(j-1)} / r_{jj}$
        \STATE \fbox{ ${\bm p}_j = A {\bm q}_j$ }
        \ENDFOR  
\end{algorithmic}
\end{algorithm}
\begin{algorithm}[t]
\caption{MGS-naive(row): The naive implementation of the row-oriented MGS}
\label{alg:mgs-naive-row}
\begin{algorithmic}[1]
        \REQUIRE $Z = [{\bm z}_1^{(0)}, {\bm z}_2^{(0)}, \dots, {\bm z}_n^{(0)}] \in \mathbb{R}^{m \times n}, A \in \mathbb{R}^{m \times m}$, where $A$ is spd.
        \ENSURE  $Q = [{\bm q}_1, {\bm q}_2, \dots, {\bm q}_n] \in \mathbb{R}^{m \times n}, R = \{ r_{ij} \}_{1 \leq i,j \leq n} \in \mathbb{R}^{n \times n}$
        \FOR{$i = 1, 2, \dots, n$}
        \STATE \fbox{ ${\bm x}_i^{(i-1)} = A {\bm z}_i^{(i-1)}$ }
        \STATE $r_{ii} = \sqrt{ ({\bm z}_i^{(i-1)})^{\rm T} {\bm x}_i^{(i-1)} }$
        \STATE ${\bm q}_i = {\bm z}_i^{(i-1)} / r_{ii}$
        \STATE \fbox{ ${\bm p}_i = A {\bm q}_i$ }
        \FOR{$j = i+1, i+2, \dots, n$}
        \STATE $r_{ij} = {\bm p}_i^{\rm T} {\bm z}_j^{(i-1)}$
        \STATE ${\bm z}_j^{(i)} = {\bm z}_j^{(i-1)} - r_{ij} {\bm q}_i$
        \ENDFOR
        \ENDFOR  
\end{algorithmic}
\end{algorithm}
\par
For the standard inner product, there is no numerical difference between the column- and row-oriented versions regarding computational cost, memory requirements and accuracy.
Because the operations and rounding errors are the same, they produce exactly the same numerical results.
On the other hand, each one has different advantages for using.
The column-oriented MGS has advantages for successive orthogonalization and reorthogonalization; in contrast, the row-oriented MGS is suitable for column pivoting.
\par
However, the situation is different for a non-standard inner product regarding computational cost and memory requirements.
In naive implementations that uses no auxiliary vectors, the row-oriented MGS requires $2n$ MV; in contrast, the column-oriented MGS requires $\mathcal{O}(n^2)$ MV to compute the $A$-inner products:
\begin{equation*}
        r_{ij} = ({\bm q}_i, {\bm z}_j^{(i-1)})_A = {\bm q}_i^{\rm T} (A {\bm z}_j^{(i-1)}) \quad (i < j),
\end{equation*}
because ${\bm z}_j^{(i-1)}$ depends on both $i$ and $j$ \cite{Stewart:2001, Zhao:2013}.
On the other hand, if storing $n$ auxiliary vectors $A{\bm q}_j, j = 1, 2, \dots, n$, is allowed, then the number of MV of the column-oriented MGS is reduced to $2n$ by computing $r_{ij}$ as
\begin{equation*}
        r_{ij} = ({\bm q}_i, {\bm z}_j^{(i-1)})_A = (A{\bm q}_i)^{\rm T} {\bm z}_j^{(i-1)} \quad (i < j),
\end{equation*}
because ${\bm q}_i$ depends only on $i$ \cite{Stewart:2001}.
This achieves a $2n$-MV implementation of the column-oriented MGS.
\par
Because the computational cost of $\mathcal{O}(n^2)$ MV is unreasonably large, we generally use the $2n$-MV implementations.
Naive implementations with $2n$ MV of the column- and row-oriented MGS are shown in Algorithms~\ref{alg:mgs-naive-col} and \ref{alg:mgs-naive-row}, respectively.
Here, we note that they have the same computational cost and produce exactly the same numerical results.
%
%
\subsection{Estimation of the minimal computational costs}
In MGS, MV with respect to $A$ is used only for computing the $A$-inner products and $A$-norms to construct the elements of $R$, 
\begin{align*}
        &r_{ij} = ({\bm q}_i, {\bm z}_j^{(i-1)})_A \quad
        (i < j), \\
        &r_{jj} = \| {\bm z}_j^{(j-1)} \|_A = \sqrt{({\bm z}_j^{(j-1)}, {\bm z}_j^{(j-1)})_A}.
\end{align*}
Then, we have the following proposition.
\begin{proposition}
        \label{prop:min}
        For each element $r_{ij}$ of $R$ in \eqref{eq:oqr}, there exist ${\bm a} \in \mathcal{R}(Z), {\bm b} \in \mathcal{R}(AZ)$ such that
        \begin{equation}
                r_{ij} = ({\bm a}, {\bm b})_2.
        \end{equation}
\end{proposition}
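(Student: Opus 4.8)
The plan is to bypass the intermediate quantities of the MGS algorithm entirely and instead read off every entry of $R$ from a single compact identity. Starting from the two defining relations of \eqref{eq:oqr}, namely $Z = QR$ and $Q^{\rm T} A Q = I_n$, I would left-multiply the first by $Q^{\rm T} A$ and substitute the second to obtain
\begin{equation*}
        Q^{\rm T} A Z = Q^{\rm T} A Q R = I_n R = R.
\end{equation*}
This yields the entrywise formula $r_{ij} = {\bm q}_i^{\rm T} A {\bm z}_j$, where ${\bm z}_j$ is the $j$-th column of the \emph{original} matrix $Z$ and ${\bm q}_i$ the $i$-th column of $Q$.

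Next I would convert this $A$-weighted product into a standard $2$-inner product by moving the factor $A$ onto one of the two vectors. Writing $r_{ij} = (A{\bm q}_i)^{\rm T} {\bm z}_j = (A{\bm q}_i, {\bm z}_j)_2$ already exhibits the required form, so it remains only to verify the range-space memberships. The factor ${\bm z}_j$ lies in $\mathcal{R}(Z)$ trivially. For $A{\bm q}_i$, I would first establish $\mathcal{R}(Q) = \mathcal{R}(Z)$: since $R$ is invertible under the full-rank assumption, $Q = Z R^{-1}$ gives $\mathcal{R}(Q) \subseteq \mathcal{R}(Z)$, while $Z = QR$ gives the reverse inclusion. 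Hence ${\bm q}_i \in \mathcal{R}(Z)$ and therefore $A{\bm q}_i \in \mathcal{R}(AZ)$. Taking ${\bm a} = {\bm z}_j$ and ${\bm b} = A{\bm q}_i$ then gives $r_{ij} = ({\bm a}, {\bm b})_2$ as claimed; for the redundant lower-triangular indices $i>j$ one simply takes the zero vector in each space.

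I do not expect a genuine obstacle, as the identity $R = Q^{\rm T} A Z$ carries essentially all the weight. The one place deserving a sanity check is the diagonal: in the MGS formulation $r_{jj}$ appears as the $A$-norm $\|{\bm z}_j^{(j-1)}\|_A$ rather than as an inner product, so I would confirm consistency by expanding ${\bm z}_j = \sum_{k \leq j} r_{kj} {\bm q}_k$ and invoking $A$-orthonormality to get ${\bm q}_j^{\rm T} A {\bm z}_j = \sum_{k \leq j} r_{kj}\, {\bm q}_j^{\rm T} A {\bm q}_k = r_{jj}$, confirming that the single formula reproduces the diagonal as well.

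A fully algorithm-intrinsic alternative, closer to the MGS quantities, would also work: the intermediate vectors ${\bm z}_j^{(i-1)}$ all lie in $\mathcal{R}(Z)$ (each is ${\bm z}_j$ minus a combination of ${\bm q}_1,\ldots,{\bm q}_{i-1} \in \mathcal{R}(Z)$), so for $i<j$ one writes $r_{ij} = ({\bm q}_i, {\bm z}_j^{(i-1)})_A = ({\bm q}_i, A{\bm z}_j^{(i-1)})_2$ directly, and for the diagonal one uses ${\bm z}_j^{(j-1)} = r_{jj}{\bm q}_j$ to replace the norm by $r_{jj} = ({\bm q}_j, {\bm z}_j^{(j-1)})_A$ before shifting $A$. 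I would present the $R = Q^{\rm T} A Z$ route as the primary argument, since it avoids any case distinction and makes transparent that a single evaluation of $AZ$ (that is, $n$ MV) suffices to furnish all of $R$.
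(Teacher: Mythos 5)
Your proof is correct, but your primary argument takes a genuinely different route from the paper's --- in fact, the paper's own proof is essentially your ``algorithm-intrinsic alternative''. The paper argues from the MGS recurrence that ${\bm q}_i, {\bm z}_j^{(i-1)} \in \mathcal{R}(Z)$ for $1 \le i \le j \le n$, so every $r_{ij}$ is an $A$-inner product $({\bm x},{\bm y})_A$ of two vectors of $\mathcal{R}(Z)$, and then rewrites it as $({\bm x}, A{\bm y})_2$ with $A{\bm y} \in \mathcal{R}(AZ)$. Your primary route instead derives the global identity $R = Q^{\rm T}AZ$ from the two defining relations of \eqref{eq:oqr}, reads off $r_{ij} = (A{\bm q}_i, {\bm z}_j)_2$, and verifies ${\bm q}_i \in \mathcal{R}(Q) \subseteq \mathcal{R}(Z)$ via $Q = ZR^{-1}$. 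What your route buys: a uniform treatment of all entries with no diagonal/off-diagonal case split, independence from the particular orthogonalization algorithm (it holds for any factorization satisfying \eqref{eq:oqr}), and the most transparent reading of the cost message, namely that a single evaluation of $AZ$ --- that is, $n$ MV --- determines all of $R$. What it costs: you must invoke full column rank of $Z$ (equivalently, invertibility of $R$) to obtain $\mathcal{R}(Q) \subseteq \mathcal{R}(Z)$, an assumption the paper needs only implicitly for MGS to run without breakdown; and it is one step removed from the quantities MGS actually computes, $r_{ij} = ({\bm q}_i, {\bm z}_j^{(i-1)})_A$, whose expressibility from $\mathcal{R}(Z)$ and $\mathcal{R}(AZ)$ is what directly motivates the $n$-MV reorganizations MGS-HA and MGS-HP built on this proposition --- the recurrence-based argument you sketch as a fallback certifies exactly that, which is why the paper prefers it.
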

\begin{proof}
        From the recurrence of MGS, ${\bm q}_i, {\bm z}_j^{(i-1)} \in \mathcal{R}(Z)$ holds for $1\le i\le j\le n$.
        Therefore, there exist ${\bm x}, {\bm y} \in \mathcal{R}(Z)$ such that
        \begin{equation*}
                r_{ij} = ({\bm x}, {\bm y})_A = ({\bm x}, A{\bm y})_2,
        \end{equation*}
        which proves Proposition~\ref{prop:min} because $A{\bm y} \in \mathcal{R}(AZ)$.
\end{proof}
Proposition~\ref{prop:min} suggests the possibility of implementing MGS with only $n$ MV required for constructing the subspace $\mathcal{R}(AZ)$.
Therefore, we estimate the minimal computational costs for MGS to be
\begin{equation}
        \mbox{minimal costs for MGS: } n \mbox{ MV} + 2mn^2 \mbox{ flops},
        \label{eq:min}
\end{equation}
whether the column- or row-oriented is used, because the number of flops for MGS with the standard inner product is $2mn^2$.
\subsection{$n$-MV implementations of MGS: MGS-HA and MGS-HP}
\begin{algorithm}[t]
\caption{MGS-HA(col): a high accuracy type efficient implementation of MGS(col)}
\label{alg:mgs-ha}
\begin{algorithmic}[1]
        \REQUIRE $Z = [{\bm z}_1^{(0)}, {\bm z}_2^{(0)}, \dots, {\bm z}_n^{(0)}] \in \mathbb{R}^{m \times n}, A \in \mathbb{R}^{m \times m}$, where $A$ is spd.
        \ENSURE  $Q = [{\bm q}_1, {\bm q}_2, \dots, {\bm q}_n] \in \mathbb{R}^{m \times n}, R = \{ r_{ij} \}_{1 \leq i,j \leq n} \in \mathbb{R}^{n \times n}$
        \FOR{$j = 1, 2, \dots, n$}
        \FOR{$i = 1, 2, \dots, j-1$}
        \STATE $r_{ij} = {\bm p}_i^{\rm T} {\bm z}_j^{(i-1)}$
        \STATE ${\bm z}_j^{(i)} = {\bm z}_j^{(i-1)} - r_{ij} {\bm q}_i$
        \ENDFOR
        \STATE \fbox{ ${\bm x}_j^{(j-1)} = A {\bm z}_j^{(j-1)}$ }
        \STATE $r_{jj} = \sqrt{ ({\bm z}_j^{(j-1)})^{\rm T} {\bm x}_j^{(j-1)} }$
        \STATE ${\bm q}_j = {\bm z}_j^{(j-1)} / r_{jj}$
        \STATE ${\bm p}_j = {\bm x}_j^{(j-1)} / r_{jj}$
        \ENDFOR  
\end{algorithmic}
\end{algorithm}
Here, we propose two types of $n$-MV implementations of both the column- and row-oriented MGS: a high accuracy type (MGS-HA) and a high performance type (MGS-HP).
\par
Firstly, we introduce a technique to achieve $n$-MV implementations for the column-oriented MGS (Algorithm~\ref{alg:mgs-naive-col}).
In each iteration for $j$, the column-oriented MGS requires two MV.
One is for computing the $A$-norm $r_{jj} = \| {\bm z}_j^{(j-1)} \|_A$ by 
\begin{align*}
        &{\bm x}_j^{(j-1)} = A {\bm z}_j^{(j-1)}, \\
        &r_{jj} = \sqrt{ ({\bm z}_j^{(j-1)} )^{\rm T} {\bm x}_j^{(j-1)} },
\end{align*}
and another is for computing the $A$-inner product $r_{ij} = ({\bm q}_i, {\bm z}_j^{(j-1)})_A$ $(i < j)$ by
\begin{align*}
        &{\bm p}_j = A {\bm q}_j, \\
        &r_{ij} = {\bm p}_i^{\rm T} {\bm z}_j^{(i-1)} \quad (i < j).
\end{align*}
Based on these formula and ${\bm q}_j = {\bm z}_j^{(j-1)}/r_{jj}$, we can compute ${\bm p}_j = A{\bm q}_j$ without MV by
\begin{equation}
        {\bm p}_j 
        = A{\bm q}_j 
        = A \frac{ {\bm z}_j^{(j-1)} }{ r_{jj} }
        = \frac{ A {\bm z}_j^{(j-1)} }{ r_{jj} }
        = \frac{ {\bm x}_j^{(j-1)} }{ r_{jj} },
        \label{eq:tec1}
\end{equation}
which achieves an $n$-MV implementation of the column-oriented MGS as shown in Algorithm~\ref{alg:mgs-ha}.
\par
Algorithm~\ref{alg:mgs-ha} has nearly the same error bounds as MGS-naive, as we will show in Section~\ref{sec:error}.
In this sense, we call this a high accuracy type MGS, MGS-HA.
The computational cost of MGS-HA is $n \mbox{ MV} + 2mn^2 \mbox{ flops}$, which is the same as the estimated minimal computational cost \eqref{eq:min}. 
Therefore, regarding the computational cost, MGS-HA is an optimal implementation for MGS.
%
\begin{algorithm}[t]
\caption{MGS-HP(col): a high performance type efficient implementation of MGS(col)}
\label{alg:mgs-hp}
\begin{algorithmic}[1]
        \REQUIRE $Z = [{\bm z}_1^{(0)}, {\bm z}_2^{(0)}, \dots, {\bm z}_n^{(0)}] \in \mathbb{R}^{m \times n}, A \in \mathbb{R}^{m \times m}$, where $A$ is spd.
        \ENSURE  $Q = [{\bm q}_1, {\bm q}_2, \dots, {\bm q}_n] \in \mathbb{R}^{m \times n}, R = \{ r_{ij} \}_{1 \leq i,j \leq n} \in \mathbb{R}^{n \times n}$
        \STATE \fbox{ $X = [{\bm x}_1^{(0)}, {\bm x}_2^{(0)}, \dots, {\bm x}_n^{(0)}]= A Z$ }
        \FOR{$j = 1, 2, \dots, n$}
        \FOR{$i = 1, 2, \dots, j-1$}
        \STATE $r_{ij} = {\bm p}_i^{\rm T} {\bm z}_j^{(i-1)}$
        \STATE ${\bm z}_j^{(i)} = {\bm z}_j^{(i-1)} - r_{ij} {\bm q}_i$
        \ENDFOR
        \STATE ${\bm x}_j^{(j-1)} = {\bm x}_j^{(0)} - \sum_{i=1}^{j-1} r_{ij} {\bm p}_i$
        \STATE $r_{jj} = \sqrt{ ({\bm z}_j^{(j-1)})^{\rm T} {\bm x}_j^{(j-1)} }$
        \STATE ${\bm q}_j = {\bm z}_j^{(j-1)} / r_{jj}$
        \STATE ${\bm p}_j = {\bm x}_j^{(j-1)} / r_{jj}$
        \ENDFOR  
\end{algorithmic}
\end{algorithm}
\par
Although MGS-HA is optimal in terms of the computational cost, it performs one MV in each iteration of the $j$ loop.
This sequential MV reduces the computational performance.
On the other hand, Proposition~\ref{prop:min} indicates that $n$ MV can be performed together because MV are required only for constructing the subspace $\mathcal{R}(AZ)$.
In other words, we firstly compute $AZ$, then we can compute all the elements $r_{ij}$ from the matrices $Z$ and $AZ$ without MV.
\par
To achieve this, we compute ${\bm x}_j^{(i)} =  A {\bm z}_j^{(i)}$ by
\begin{equation}
        {\bm x}_j^{(i)} 
        =  A {\bm z}_j^{(i)}
        =  A \left( {\bm z}_j^{(0)} - \sum_{i=1}^{j-1} r_{ij} {\bm q}_i \right)
        =  A {\bm z}_j^{(0)} - \sum_{i=1}^{j-1} r_{ij} A{\bm q}_i
        = {\bm x}_j^{(0)} - \sum_{i=1}^{j-1} r_{ij} {\bm p}_i,
        \label{eq:tec2}
\end{equation}
where $X = [{\bm x}_1^{(0)}, {\bm x}_2^{(0)}, \dots, {\bm x}_n^{(0)}]= A Z$ as shown in Algorithm~\ref{alg:mgs-hp}.
The computational cost of Algorithm~\ref{alg:mgs-hp} is $n \mbox{ MV} + 3mn^2 \mbox{ flops}$, which is larger than that of MGS-HA.
However, Algorithm~\ref{alg:mgs-hp} is expected to show higher computational performance and smaller computational time than MGS-HA (cost: $n \mbox{ MV} + 2mn^2 \mbox{ flops}$), because a matrix-matrix multiplication is much faster than the sequential MV.
In this sense, we call this a high performance type MGS, MGS-HP.
\par
We can derive $n$-MV implementations of the row-oriented MGS in the same manner.
The vector ${\bm p}_i = A{\bm q}_i$ is computed without MV by
\begin{equation*}
        {\bm p}_i \left(= A {\bm q}_i \right) = \frac{ {\bm x}_i^{(i-1)} }{ r_{ii} },
\end{equation*}
as well as \eqref{eq:tec1} and the vector ${\bm x}_i^{(i-1)} = A{\bm z}_i^{(i-1)}$ is computed without a sequential MV by
\begin{equation*}
        {\bm x}_i^{(i-1)} \left(= A{\bm z}_i^{(i-1)}\right) = {\bm x}_i^{(0)} - \sum_{j=i+1}^{n} r_{ij} {\bm p}_i
\end{equation*}
as well as \eqref{eq:tec2} for MGS-HP(row).
The algorithms of MGS-HA(row) and MGS-HP(row) are shown in Algorithms~\ref{alg:mgs-ha-row} and \ref{alg:mgs-hp-row}, respectively.
\begin{algorithm}[t]
\caption{MGS-HA(row): a high accuracy type efficient implementation of MGS(row)}
\label{alg:mgs-ha-row}
\begin{algorithmic}[1]
        \REQUIRE $Z = [{\bm z}_1^{(0)}, {\bm z}_2^{(0)}, \dots, {\bm z}_n^{(0)}] \in \mathbb{R}^{m \times n}, A \in \mathbb{R}^{m \times m}$, where $A$ is spd.
        \ENSURE  $Q = [{\bm q}_1, {\bm q}_2, \dots, {\bm q}_n] \in \mathbb{R}^{m \times n}, R = \{ r_{ij} \}_{1 \leq i,j \leq n} \in \mathbb{R}^{n \times n}$
        \FOR{$i = 1, 2, \dots, n$}
        \STATE \fbox{ ${\bm x}_i^{(i-1)} = A {\bm z}_i^{(i-1)}$ }
        \STATE $r_{ii} = \sqrt{ ({\bm z}_i^{(i-1)})^{\rm T} {\bm x}_i^{(i-1)} }$
        \STATE ${\bm q}_i = {\bm z}_i^{(i-1)} / r_{ii}$
        \STATE ${\bm p}_i = {\bm x}_i^{(i-1)} / r_{ii}$
        \FOR{$j = i+1, i+2, \dots, n$}
        \STATE $r_{ij} = {\bm p}_i^{\rm T} {\bm z}_j^{(i-1)}$
        \STATE ${\bm z}_j^{(i)} = {\bm z}_j^{(i-1)} - r_{ij} {\bm q}_i$
        \ENDFOR
        \ENDFOR  
\end{algorithmic}
\end{algorithm}
\begin{algorithm}[t]
\caption{MGS-HP(row): a high performance type efficient implementation of MGS(row)}
\label{alg:mgs-hp-row}
\begin{algorithmic}[1]
        \REQUIRE $Z = [{\bm z}_1^{(0)}, {\bm z}_2^{(0)}, \dots, {\bm z}_n^{(0)}] \in \mathbb{R}^{m \times n}, A \in \mathbb{R}^{m \times m}$, where $A$ is spd.
        \ENSURE  $Q = [{\bm q}_1, {\bm q}_2, \dots, {\bm q}_n] \in \mathbb{R}^{m \times n}, R = \{ r_{ij} \}_{1 \leq i,j \leq n} \in \mathbb{R}^{n \times n}$
        \STATE \fbox{ $X = [{\bm x}_1^{(0)}, {\bm x}_2^{(0)}, \dots, {\bm x}_n^{(0)}]= A Z$ }
        \FOR{$i = 1, 2, \dots, n$}
        \STATE $r_{ii} = \sqrt{ ({\bm z}_i^{(i-1)})^{\rm T} {\bm x}_i^{(i-1)} }$
        \STATE ${\bm q}_i = {\bm z}_i^{(i-1)} / r_{ii}$
        \STATE ${\bm p}_i = {\bm x}_i^{(i-1)} / r_{ii}$
        \FOR{$j = i+1, i+2, \dots, n$}
        \STATE $r_{ij} = {\bm p}_i^{\rm T} {\bm z}_j^{(i-1)}$
        \STATE ${\bm z}_j^{(i)} = {\bm z}_j^{(i-1)} - r_{ij} {\bm q}_i$
        \ENDFOR
        \STATE ${\bm x}_i^{(i-1)} = {\bm x}_i^{(0)} - \sum_{j=i+1}^{n} r_{ij} {\bm p}_i$
        \ENDFOR  
\end{algorithmic}
\end{algorithm}
\par
The proposed concept can also be applied to CGS for its $n$-MV implementations: CGS-HA(col/row) and CGS-HP(col/row).
It is also noted that the HP-type of row-oriented versions: MGS-HP(row) and CGS-HP(row), are equivalent to the algorithms introduced in \cite{Dubrulle:2001} to use in the block conjugate gradient method for solving linear systems with multiple right-hand sides.
However, the performance of these algorithms are not analyzed and evaluated in \cite{Dubrulle:2001}, because the main objective of \cite{Dubrulle:2001} is to propose the block conjugate gradient method.
\section{Analysis of error bounds}
\label{sec:error}
In this section, we present error bounds on the representation error and the loss of $A$-orthogonality of MGS-HA (Algorithm~\ref{alg:mgs-ha}) and show that MGS-HA has nearly the same error bounds as MGS-naive (Algorithm~\ref{alg:mgs-naive-col}).
\par
Let $\alpha \in \mathbb{R}, {\bm x} \in \mathbb{R}^{m}, A \in \mathbb{R}^{m \times n}$ and let $\widehat\alpha \in \mathbb{R}, \widehat{\bm x} \in \mathbb{R}^{m}, \widehat{A} \in \mathbb{R}^{m \times n}$ denote their counterparts computed in floating-point arithmetic. 
Also, we denote by $|A|$ and $|{\bm x}|$ the matrix and the vector whose entries are absolute values of entries of $A$ and ${\bm x}$, respectively.
\par
Assuming that $\alpha \in \mathbb{R}, {\bm x}, {\bm y} \in \mathbb{R}^{m}, A \in \mathbb{R}^{m \times m}$, we use the following error bounds for scaling ${\bm y} = \alpha{\bm x}$, inner product $\alpha = {\bm x}^{\rm T} {\bm y}$ and MV ${\bm y} = A {\bm x}$ computed in floating-point arithmetic:
\begin{align}
        &\widehat{\bm y} = \alpha {\bm x} + \Delta {\bm y}, \quad | \Delta {\bm y} | \leq {\bf u} | \alpha | | {\bm x} |, \label{eq:error_AXPY} \\
        &\widehat{\alpha} = {\bm x}^{\rm T} {\bm y} + \Delta \alpha, \quad | \Delta \alpha | \leq \gamma_m | {\bm x} |^{\rm T} | {\bm y} |, \label{eq:error_inner} \\
        &\widehat{\bm y} = A {\bm x} + \Delta {\bm y}, \quad | \Delta {\bm y} | \leq \gamma_m |A| | {\bm x} |, \label{eq:error_MV}
\end{align}
where ${\bf u}$ is the unit rounding error and $\gamma_m := m {\bf u} / (1 - m {\bf u}) \approx m {\bf u}$ \cite{Higham:2002}.
\subsection{Upper bound of representation error}
The recurrence formulas of ${\bm z}_j^{(i)}$ and ${\bm q}_j$ in Gram-Schmidt orthogonalization are written as
\begin{align}
        &{\bm z}_j^{(i)} = {\bm z}_j^{(i-1)}-r_{ij}{\bm q}_i \quad (i=1, 2, \ldots, j-1), \label{eq:AXPY} \\
        &{\bm q}_j = \frac{{\bm z}_j^{(j-1)}}{r_{jj}}. \label{eq:scaling}
\end{align}
These formulas are independent of the inner product used.
They are also the same whether the naive implementation (MGS-naive, Algorithm~\ref{alg:mgs-naive-col}) or the proposed implementation (MGS-HA, Algorithm~\ref{alg:mgs-ha}) is used.
The only difference between MGS-naive and MGS-HA lies in how to compute $r_{ij}$.
\par
In \cite[Theorem 3.1]{Rozloznik:2012}, an upper bound on the representation error of MGS-naive in floating-point arithmetic is derived as
\begin{equation}
        \|Z - \widehat{Q} \widehat{R} \| \le O(n^{3/2})\left(\|Z\|+\|\widehat{Q}\| \|\widehat{R}\|\right)
        \label{eq:Th3.1}
\end{equation}
based only on \eqref{eq:AXPY} and \eqref{eq:scaling}. 
Because MGS-HA also uses \eqref{eq:AXPY} and \eqref{eq:scaling}, we have the same upper bound on the representation error of MGS-HA.
It is to be noted that the upper bound (\ref{eq:Th3.1}) depends on the computed results $\widehat{Q}, \widehat{R}$, so it is an {\it a posteriori} error bound.
Hence, \eqref{eq:Th3.1} means that, if the norms of the computed results $\widehat{Q}, \widehat{R}$ are nearly the same for both methods, they have nearly the same upper bounds.
\par
Eqs.~\eqref{eq:AXPY} and \eqref{eq:scaling} are also the same for CGS-naive and CGS-HA.
Therefore, we have the same upper bound of the representation error for CGS-naive and CGS-HA.
\subsection{Upper bound of loss of $A$-orthogonality}
The main difference between MGS-naive and MGS-HA lies in how to compute $r_{ij}$ for the strict upper triangular part ($i < j$), because both of the methods compute the diagonal element by $r_{jj}=({\bm z}_j^{(j-1)})^{\rm T}A{\bm z}_j^{(j-1)}$.
\par
MGS-naive computes $r_{ij}$ from ${\bm q}_i$ and ${\bm z}_j^{(i-1)}$ $(i<j)$ by
%
\begin{align}
        &{\bm p}_i = A{\bm q}_i, \label{eq:conv1} \\
        &r_{ij} = {\bm p}_i^{\rm T}{\bm z}_j^{(i-1)}. \label{eq:conv2}
\end{align}
In contrast, MGS-HA computes $r_{ij}$ from the unnormalized vector ${\bm z}_j^{(j-1)}$ by
\begin{align}
        &{\bm x}_i^{(i-1)} = A{\bm z}_i^{(i-1)} \label{eq:Imakura1}, \\
        &{\bm p}_i = \frac{{\bm x}_i^{(i-1)}}{r_{ii}} \label{eq:Imakura2}, \\
        &r_{ij} = {\bm p}_i^{\rm T}{\bm z}_j^{(i-1)} \label{eq:Imakura3}.
\end{align}
On the other hand, the vector ${\bm q}_i$ is computed by normalization of ${\bm z}_i^{(i-1)}$ as in MGS-naive, i.e.,
\begin{equation*}
        {\bm q}_i = \frac{{\bm z}_i^{(i-1)}}{r_{ii}}.
\end{equation*}
\par
According to \cite[Theorem 3.2]{Rozloznik:2012}, the local errors of $A$-inner product, AXPY \eqref{eq:AXPY} and scaling \eqref{eq:scaling} are propagated by $\widehat{R}^{-1}$ to be the loss of $A$-orthogonality of MGS-naive, $\widehat{Q}^{\rm T}A\widehat{Q}-I_n$.
Eqs.~\eqref{eq:AXPY} and \eqref{eq:scaling} are same for both methods.
We can use the same evaluation for the norm of $\widehat{R}^{-1}$.
Therefore, we just analyze the local error of the $A$-inner product.
\par
From the error bounds of MV and inner product, (\ref{eq:error_MV}) and (\ref{eq:error_inner}), Eqs.~\eqref{eq:conv1} and \eqref{eq:conv2} in floating-point arithmetic can be written as
\begin{align}
        &\widehat{\bm p}_i = A\widehat{\bm q}_i + \Delta{\bm p}_i, \quad
        |\Delta{\bm p}_i| \le \gamma_m|A||\widehat{\bm q}_i|, \label{eq:conv1_f} \\
        &\widehat r_{ij} = \widehat{\bm p}_i^{\rm T}\widehat{\bm z}_j^{(i-1)} + \Delta r_{ij}, \quad 
        |\Delta r_{ij}|\le \gamma_m|\widehat{\bm p}_i|^{\rm T}|\widehat{\bm z}_j^{(i-1)}|. \label{eq:conv2_f}
\end{align}
From \eqref{eq:conv1_f} and \eqref{eq:conv2_f}, an error bound of $\widehat r_{ij}$ computed by MGS-naive, ignoring terms of $\mathcal{O}({\bf u}^2)$, is derived \cite{Rozloznik:2012} as
\begin{align}
        |\widehat r_{ij} - \widehat{\bm q}_i^{\rm T}A\widehat{\bm z}_j^{(i-1)} |
        & = |( \widehat{\bm p}_i- A\widehat{\bm q}_i )^{\rm T} \widehat{\bm z}_j^{(i-1)}  + \Delta r_{ij}  | \nonumber \\
        &\le |(\Delta{\bm p}_i)^{\rm T}\widehat{\bm z}_j^{(i-1)}| + |\Delta r_{ij}| \nonumber \\
        &\le \|\Delta{\bm p}_i \| \|\widehat{\bm z}_j^{(i-1)} \| + \gamma_m \|\widehat{\bm p}_i \| \| \widehat{\bm z}_j^{(i-1)} \| \nonumber \\
        &\le \gamma_m\|\,|A|\,\| \|\widehat{\bm q}_i \| \|\widehat{\bm z}_j^{(i-1)}\| + \gamma_m\|A\| \|\widehat{\bm q}_i\| \|\widehat{\bm z}_j^{(i-1)}\| \nonumber \\
        &\le \gamma_{m\sqrt{m}+m}\|A\| \|\widehat{\bm q}_i\| \|\widehat{\bm z}_j^{(i-1)}\|,
        \label{eq:conv3_f}
\end{align}
where we used $\|\,|A|\,\| \le \|A\|_{\rm F} \le \sqrt{m}\|A\|$, $\ell\gamma_k\le\gamma_{\ell k}$ and $\gamma_k+\gamma_{\ell}\le\gamma_{k+\ell}$ \cite{Higham:2002}.
\par
In contrast, formulas \eqref{eq:Imakura1}--\eqref{eq:Imakura3} of MGS-HA in floating-point arithmetic become
\begin{align}
        &\widehat{\bm x}_i^{(i-1)} = A\widehat{\bm z}_i^{(i-1)} + \Delta{\bm x}_i^{(i-1)}, \quad
        |\Delta{\bm x}_i^{(i-1)}| \le \gamma_m|A||\widehat{\bm z}_i^{(i-1)}|, \label{eq:Imakura1_f} \\
        &\widehat{\bm p}_i = \frac{\widehat{\bm x}_i^{(i-1)}}{ \widehat r_{ii}} + \Delta{\bm p}_i, \quad 
        |\Delta{\bm p}_i| \le {\bf u} \frac{|\widehat{\bm x}_i^{(i-1)}|}{|\widehat r_{ii}|}, \label{eq:Imakura2_f} \\
        &\widehat r_{ij} = \widehat{\bm p}_i^{\rm T}\widehat{\bm z}_j^{(i-1)} + \Delta r_{ij}, \quad |\Delta r_{ij}| \le \gamma_m|\widehat{\bm p}_i|^{\rm T} |\widehat{\bm z}_j^{(i-1)}|. \label{eq:Imakura3_f}
\end{align}
These formulas compute $r_{ij}$ from $\widehat{\bm z}_i^{(i-1)}$ and $\widehat{\bm z}_j^{(i-1)}$.
Because the local error of $A$-inner product is defined as the difference between $\widehat r_{ij}$ and $\widehat{\bm q}_i^{\rm T}A\widehat{\bm z}_j^{(i-1)}$, we also need a relationship between $\widehat{\bm q}_i$ and $\widehat{\bm z}_i^{(i-1)}$, i.e.,
\begin{equation}
        \widehat{\bm q}_i = \frac{ \widehat{\bm z}_i^{(i-1)} }{\widehat r_{ii}} + \Delta{\bm q}_i, \quad
        |\Delta{\bm q}_i| \le {\bf u}\frac{|\widehat{\bm z}_i^{(i-1)}|}{|\widehat r_{ii}|}. \label{eq:Imakura4_f}
\end{equation}
Substituting \eqref{eq:Imakura3_f}, \eqref{eq:Imakura2_f}, \eqref{eq:Imakura1_f} and \eqref{eq:Imakura4_f} into $|\widehat r_{ij} - \widehat{\bm q}_i^{\rm T}A\widehat{\bm z}_j^{(i-1)}|$ in this order and ignoring terms of $\mathcal{O}({\bf u}^2)$, we have
\begin{align}
        &|\widehat r_{ij} - \widehat{\bm q}_i^{\rm T}A\widehat{\bm z}_j^{(i-1)}| \nonumber \\
        &\quad \le 
        \left|\frac{(\Delta{\bm x}_i^{(i-1)})^{\rm T}\widehat{\bm z}_j^{(i-1)}}{\widehat r_{ii}}\right|
        + \left|(\Delta{\bm q}_i)^{\rm T}A\widehat{\bm z}_j^{(i-1)} \right|
        + \left|\Delta{\bm p}_i^{\rm T}\widehat{\bm z}_j^{(i-1)} \right|
        + |\Delta r_{ij}| \nonumber \\
        &\quad \le 
        \gamma_m \frac{\|\,|A|\,\| \|\widehat{\bm z}_i^{(i-1)}\| \|\widehat{\bm z}_j^{(i-1)}\|}{|\widehat r_{ii}|}
        + {\bf u} \frac{\|A\| \|\widehat{\bm z}_i^{(i-1)}\| \|\widehat{\bm z}_j^{(i-1)}\|}{|\widehat r_{ii}|}
        + {\bf u} \frac{\|\widehat{\bm x}_i^{(i-1)}\| \|\widehat{\bm z}_j^{(i-1)}\|}{|\widehat r_{ii}|} \nonumber \\
        &\quad \hphantom{\le} \quad + \gamma_m \|\widehat{\bm p}_i\| \|\widehat{\bm z}_j^{(i-1)}\| \nonumber \\
        &\quad \le 
        \gamma_m\|\,|A|\,\| \|\widehat{\bm q}_i\| \|\widehat{\bm z}_j^{(i-1)}\|
        + {\bf u}\|A\| \|\widehat{\bm q}_i\| \|\widehat{\bm z}_j^{(i-1)}\|
        + {\bf u}\|A\| \|\widehat{\bm q}_i\| \|\widehat{\bm z}_j^{(i-1)}\| \nonumber \\
        &\quad \hphantom{\le} \quad + \gamma_m\|A\| \|\widehat{\bm q}_i\| \|\widehat{\bm z}_j^{(i-1)}\| \nonumber \\
        &\quad \le \gamma_{m\sqrt{m}+m+2} \|A\| \|\widehat{\bm q}_i\| \|\widehat{\bm z}_j^{(i-1)}\|.
\label{eq:Imakura5_f}
\end{align}
\par
Comparing \eqref{eq:conv3_f} for MGS-naive and \eqref{eq:Imakura5_f} for MGS-HA, we know that the only difference is in the coefficients:
\begin{align*}
        &\gamma_{m\sqrt{m}+m} \approx (m \sqrt{m} + m) {\bf u} \approx \mathcal{O}(m^{3/2}) {\bf u}, \\
        &\gamma_{m\sqrt{m}+m+2} \approx (m \sqrt{m} + m + 2) {\bf u} \approx \mathcal{O}(m^{3/2}) {\bf u}.
\end{align*}

In \cite{Rozloznik:2012}, it is shown that the strict upper triangular part of the loss of $A$-orthogonality $\widehat{Q}^{\rm T}A\widehat{Q}-I_n$, which is denoted as $\Delta E^{(3)}$, can be bounded as
\begin{equation}
        \|\Delta E^{(3)}\| \le \|\widehat{R}^{-1}\| \|\Delta E^{(2)}\|_{\rm F}, \label{eq:E3bound}
\end{equation}
where $\Delta E^{(2)}$ is a strict upper triangular matrix defined by
\begin{align}
&[\Delta E^{(2)}]_{ij} = -(\widehat{\bm q}_i,\Delta{\bm y}_j^{(i)})_A+(\widehat{\bm q}_i,\sum_{k=i}^j\Delta{\bm d}_j^{(k)})_A, \label{eq:E2definition} \\
&\Delta{\bm y}_j^{(i)} = (\widehat{r}_{ij}-\widehat{\bm q}_i^{\rm T}A\widehat{\bm z}_j^{(i-1)})\widehat{\bm q}_i+(\|\widehat{\bm q}_i\|_A^2-1)\widehat{\bm z}_j^{(i-1)} \label{eq:Deltaydefinition}
\end{align}
and $\Delta{\bm d}_j^{(i)}$ ($i<j$) and $\Delta{\bm d}_j^{(j)}$ are floating-point errors arising in the AXPY operation (\ref{eq:AXPY}) and scaling (\ref{eq:scaling}), respectively\footnote{The definition of $[\Delta E^{(2)}]_{ij}$ given in \cite{Rozloznik:2012} is actually the definition of $[\Delta E^{(2)}]_{ji}$. We corrected this in Eq.~(\ref{eq:E2definition}).}. See the proof of Theorem 3.2 in \cite{Rozloznik:2012} for details.

In Eqs.~(\ref{eq:E3bound})--(\ref{eq:Deltaydefinition}), the AXPY error $\Delta{\bm d}_j^{(i)}$ ($i<j$) and the scaling errors $\Delta{\bm d}_j^{(j)}$ and $\|\widehat{\bm q}_i\|_A^2-1$ can be bounded by the same expression in both methods, because their computational formulas are the same. The norm $\|\widehat{R}^{-1}\|$ can also be bounded in the same way in both methods. Hence, the only difference lies in the evaluation of the local error of $\widehat{r}_{ij}$, defined as $\widehat{r}_{ij}-\widehat{\bm q}_i^{\rm T}A\widehat{\bm z}_j^{(i-1)}$. But comparing (\ref{eq:conv3_f}) with (\ref{eq:Imakura5_f}) reveals that the difference in this part is slight. In addition, the diagonal part of $\widehat{Q}^{\rm T}A\widehat{Q}-I_n$ is nothing but the scaling error and has the same bound for both methods. Thus we can conclude that MGS-HA has the same {\it a posteriori} bound for the loss of $A$-orthogonality as MGS-naive \cite{Rozloznik:2012}:
\begin{align}
        \| \widehat Q^{\rm T} A \widehat{Q} - I_n \| 
        &\leq
        \frac{\mathcal{O}(m^{3/2}){\bf u}\|A\|\|\widehat{Q}\|\max_{i\le j}\frac{\|\widehat{\bm z}_j^{(i-1)}\|}{\|\widehat{\bm z}_j^{(i-1)}\|_A}\kappa(A^{1/2}Z)}{1-\mathcal{O}(m^{3/2}){\bf u}\|A\|\|\widehat{Q}\|\max_{i\le j}\frac{\|\widehat{\bm z}_j^{(i-1)}\|}{\|\widehat{\bm z}_j^{(i-1)}\|_A}\kappa(A^{1/2}Z)}
\nonumber \\
        &\leq
        \frac{\mathcal{O}(m^{3/2}){\bf u}\kappa(A)\kappa(A^{1/2}Z)}{1-\mathcal{O}(m^{3/2}){\bf u}\kappa(A)\kappa(A^{1/2}Z)},
        \label{eq:bound_a}
\end{align}
provided that $\mathcal{O}(m^{3/2}){\bf u}\kappa(A)\kappa(A^{1/2}Z)<1$.
\subsection{Analysis of CGS}
For a variant of CGS, CGS-P \cite{Smoktunowicz:2006}, that computes the diagonal element $r_{jj}$ in a different way from the original CGS, error bounds for a non-standard inner product are given in \cite{Rozloznik:2012}.
On the other hand, error bounds of original CGS have not been well analyzed yet for a non-standard inner product.
\par
However, we can estimate the influence of the proposed approach on the error bounds of CGS.
As in the case of MGS, the only difference between CGS-naive and CGS-HA is how to compute $r_{ij}$ $(i < j)$.
For both CGS-naive and CGS-HA, the recurrence formulas are obtained from those of MGS-naive and MGS-HA, respectively, by changing $\widehat{\bm z}_j^{(i-1)}$ to $\widehat{\bm z}_j^{(0)}$. 
Therefore, the local error in the computation of $\widehat r_{ij}$ can be evaluated by \eqref{eq:conv3_f} and \eqref{eq:Imakura5_f} by changing $\widehat{\bm z}_j^{(i-1)}$ to $\widehat{\bm z}_j^{(0)}$.
Thus, the local errors of $\widehat r_{ij}$ are nearly the same for both CGS-naive and CGS-HA.
As a result, we can expect that CGS-HA has nearly the same loss of $A$-orthogonality as CGS-naive.
\section{Numerical experiments}
\label{sec:experiment}
In this section, we evaluate the computational performance of MGS-HA (Algorithm~\ref{alg:mgs-ha}) and MGS-HP (Algorithm~\ref{alg:mgs-hp}). In particular, we compare the computation time and the loss of $A$-orthogonality of these methods with those of MGS-naive (Algorithm~\ref{alg:mgs-naive-col}), CGS-naive and Cholesky QR, the last of which is one of the fastest algorithms for \eqref{eq:oqr}.
\subsection{Numerical experiment I}
\begin{figure}[t]
\centering
\subfloat[Dense problem]{
\includegraphics[bb = 0 0 360 216, scale=0.5]{./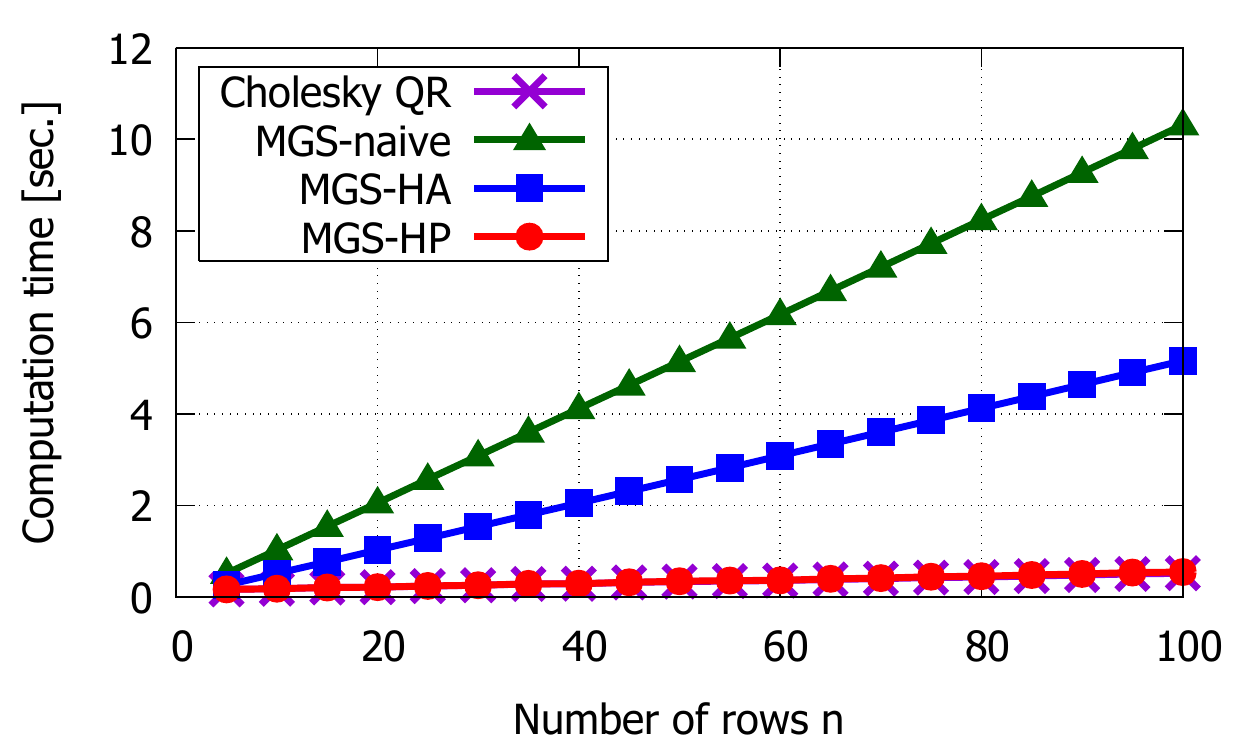}
\includegraphics[bb = 0 0 360 216, scale=0.5]{./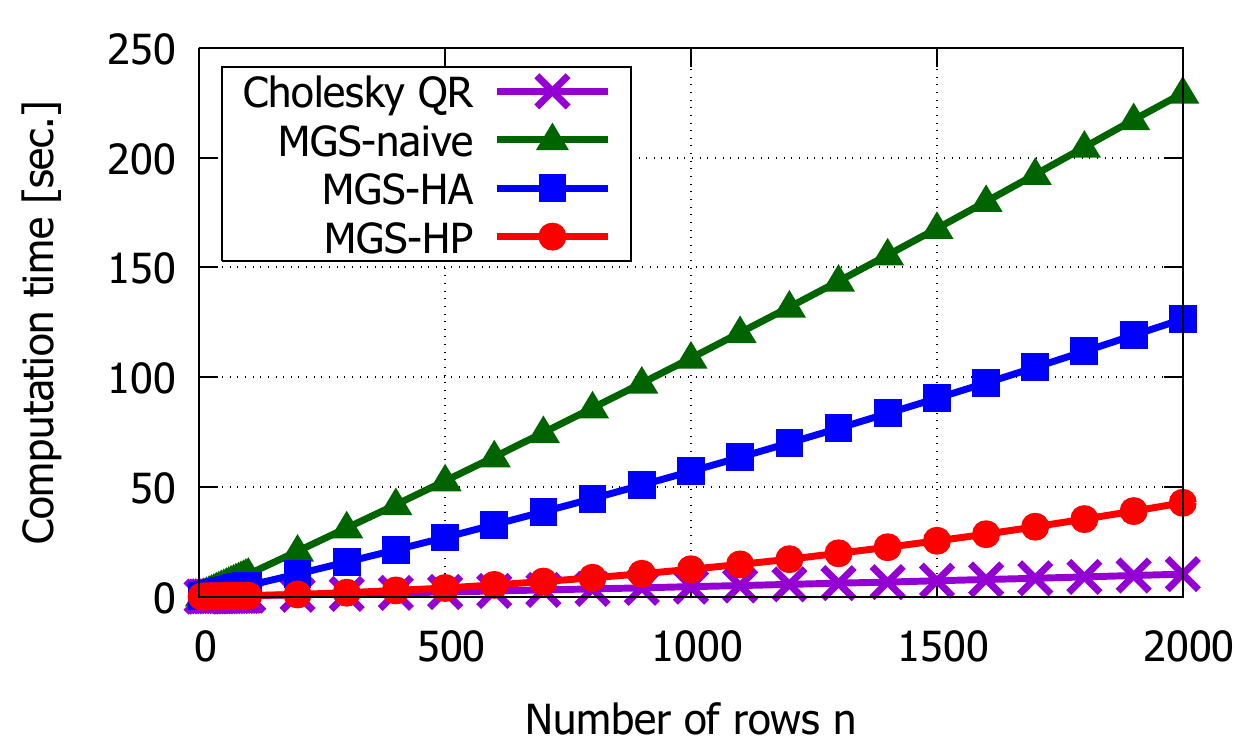}
}\\
\subfloat[Sparse problem (AUNW9180)]{
\includegraphics[bb = 0 0 360 216, scale=0.5]{./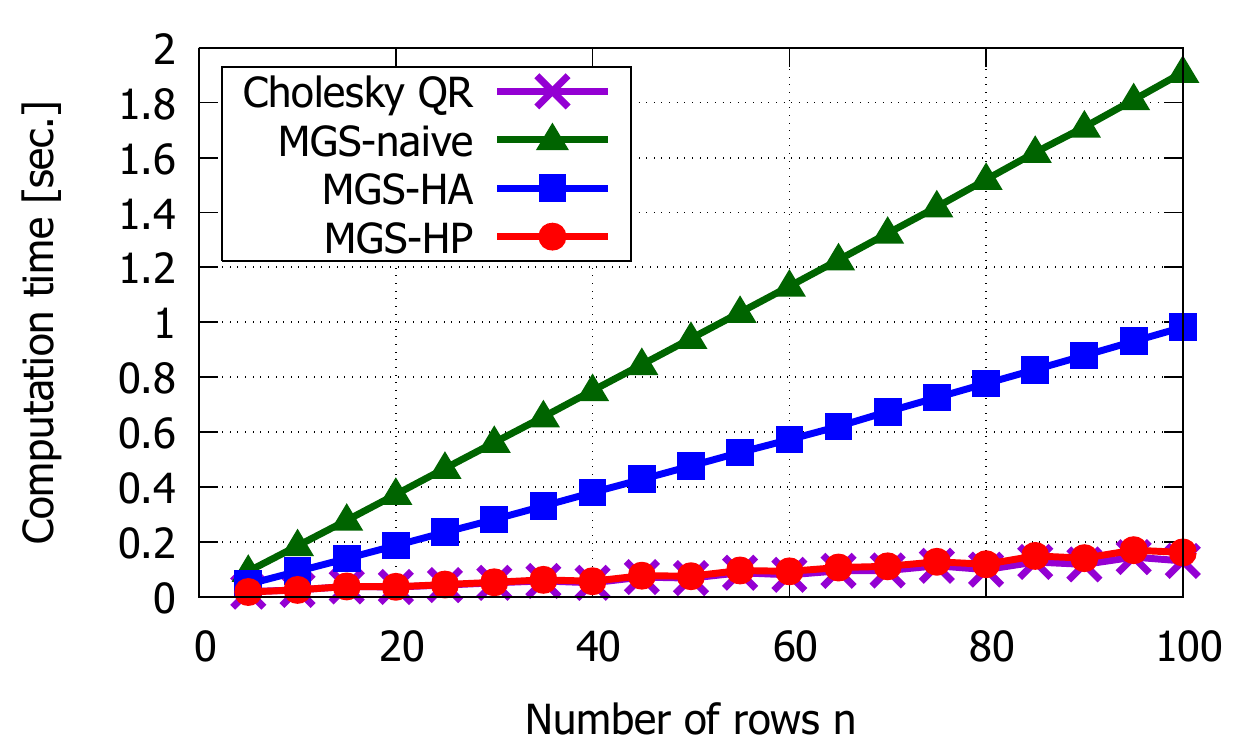}
\includegraphics[bb = 0 0 360 216, scale=0.5]{./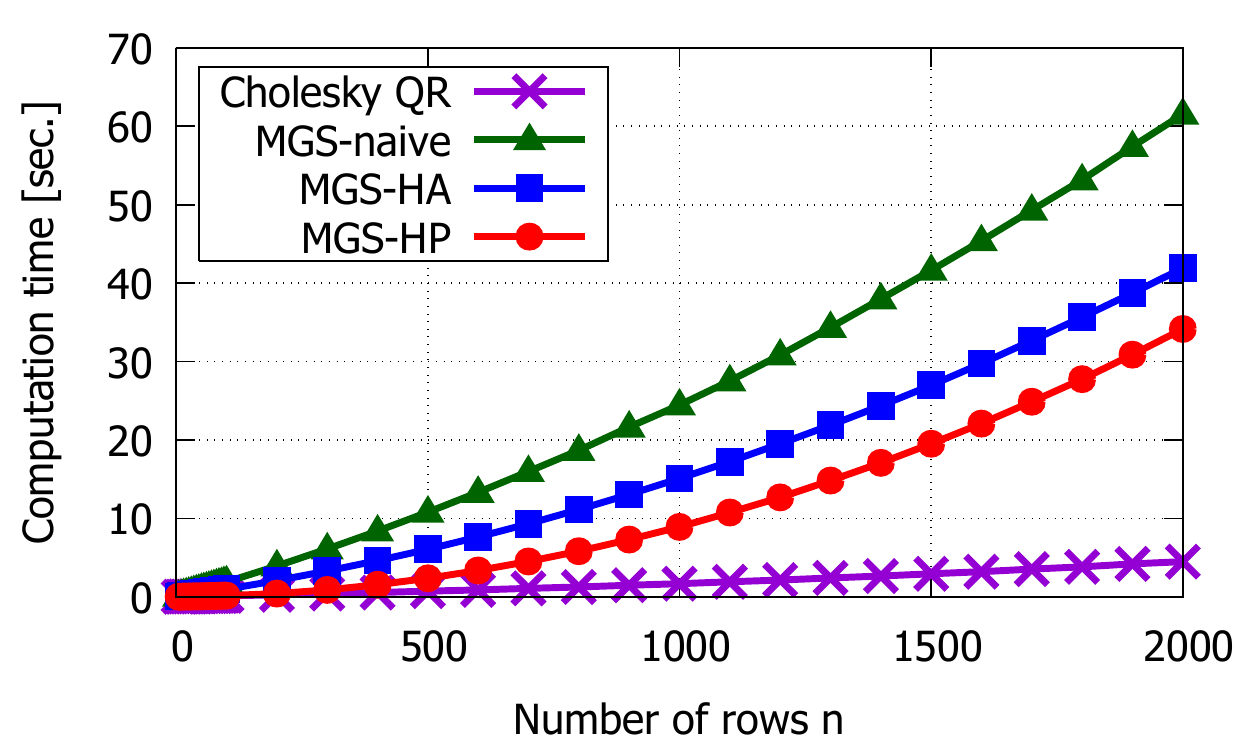}
}
\caption{
        Computation time [sec.] of MGS-naive, MGS-HA, MGS-HP and CholeskyQR.
}
\label{fig:time}
\end{figure}
\begin{figure}[t]
\centering
\subfloat[Dense problem]{
\includegraphics[bb = 0 0 360 216, scale=0.5]{./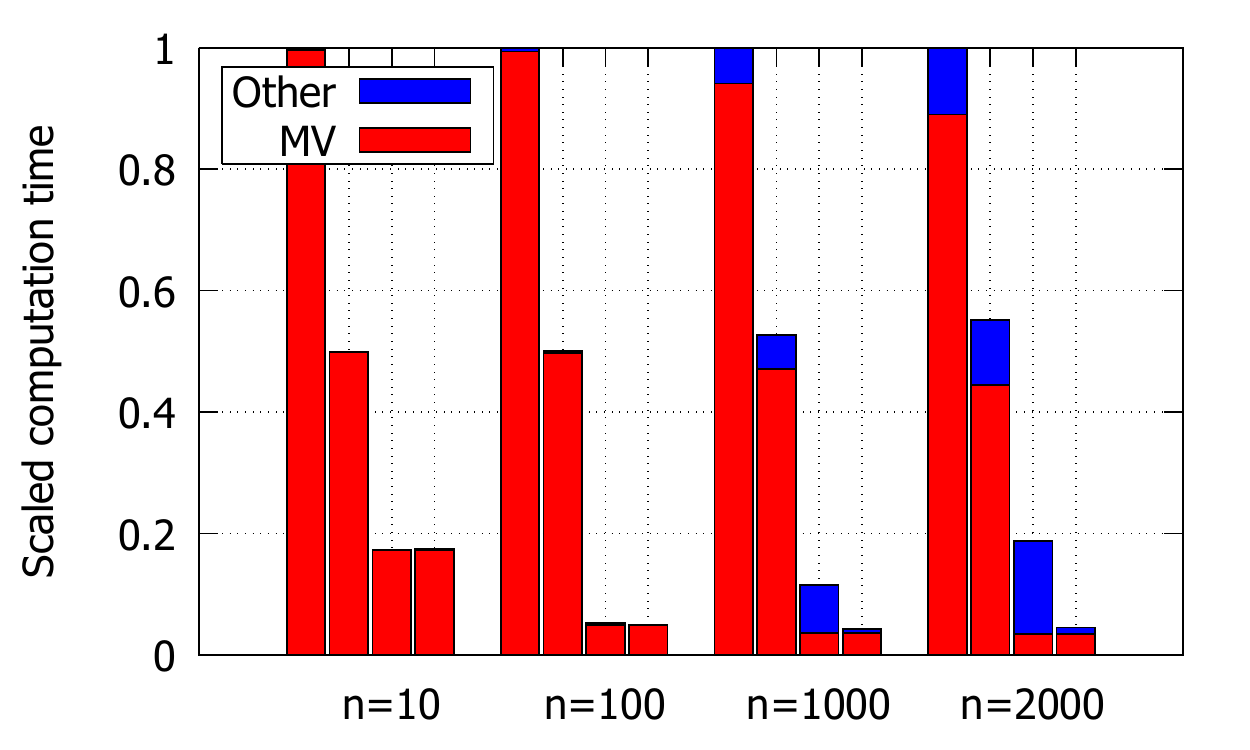}
}
\subfloat[Sparse problem (AUNW9180)]{
\includegraphics[bb = 0 0 360 216, scale=0.5]{./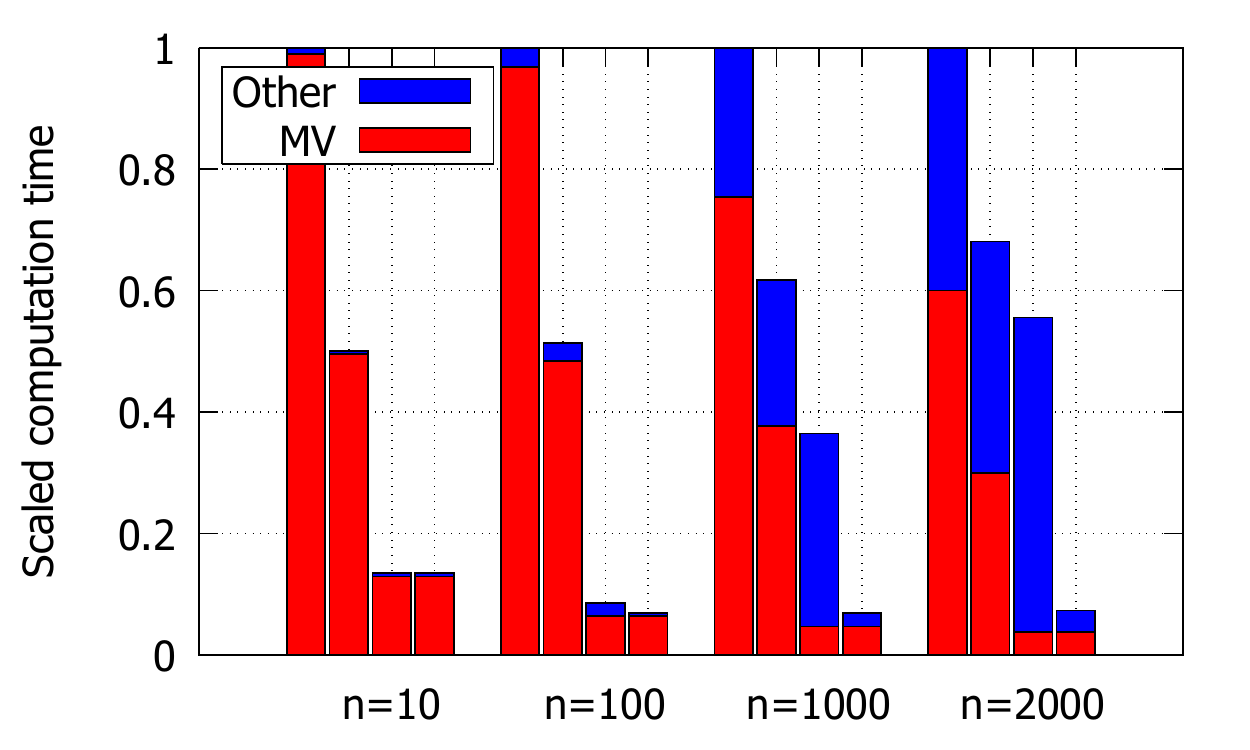}
}
\caption{
        Computation time scaled by the total computation time of MGS-naive for each $n$.
        The bar graph for each $n$ represents the time for MGS-naive, MGS-HA, MGS-HP and Cholesky QR, respectively (from left to right).
}
\label{fig:rate}
\end{figure}
Firstly, we compare the computation time of MGS-naive, MGS-HA, MGS-HP and Cholesky QR for two different problems.
For the first problem, $A$ is a random dense spd matrix with $m = 10000$.
For the second problem, $A$ is a sparse spd matrix AUNW9180 obtained from ELSES matrix library \cite{ELSES}.
This is an overlap matrix in an electronic structure calculation of a helical multishell gold nanowire.
The size of the matrix is $m = 9180$ and the number of non-zero entries is $nnz = 3557446$.
For both problems, we set $Z$ to be a random dense matrix.
We test $n = 5, 10, \dots, 100, 200, \dots, 2000$.
\par
All the numerical experiments were carried out in double precision arithmetic on OS: CentOS 64bit, CPU: Intel Xeon CPU E5-2667 3.20GHz (1 core), Memory: 48GB.
We used Intel MKL for matrix computations and Mersenne twister for generating random matrices.
\par
Figure~\ref{fig:time} shows the computation time for both problems, while Figure~\ref{fig:rate} shows breakdown of the computation time scaled by the total computation time of MGS-naive for each $n$.
When $n \ll m$, most of the computation time is used for computing MV and hence the total time increases proportionally to $n$; see the left columns of Figure~\ref{fig:time} and Figure~\ref{fig:rate}.
In this situation, MGS-HA achieves 2x speedup over MGS-naive. MGS-HP and Cholesky QR are even faster and show drastic speedup over these methods.
On the other hand, as $n$ becomes larger, the ratio of computation time for other parts increases, especially for the sparse problem.
In this situation, the speedup ratio of the proposed methods becomes relatively small, although both methods are still faster than MGS-naive.
%

%
\subsection{Numerical experiment II}
Next, we compare the loss of $A$-orthogonality
\begin{equation}
        \| \widehat{Q}^{\rm T} A \widehat{Q} - I_n \|
\end{equation}
of MGS-naive, MGS-HA, MGS-HP, CGS-naive and Cholesky QR.
Let $V \in \mathbb{R}^{m \times m}$ be a random orthogonal matrix.
Then, we set $A$ as
\begin{equation*}
        A = V D V^{\rm T},
\end{equation*}
where
\begin{equation*}
        D = {\rm diag}(d_{1}, d_{2}, \dots, d_{m}), \quad
        d_{i} = 10^{\alpha(i-1)}, \quad
        \alpha = \frac{\log_{10} \kappa(A)}{m-1},
\end{equation*}
so that $\log_{10} d_{i}$ are evenly spaced.
Also, let $W \in \mathbb{R}^{n \times n}$ be a random orthogonal matrix and $U_1, U_2 \in \mathbb{R}^{m \times n}$ be matrices whose columns are eigenvectors of $A$ corresponding to the $n$ largest and the $n$ smallest eigenvalues, respectively.
Then, we set $Z$ as
\begin{align*}
        &\mbox {case 1: } Z = U_1 E W^{\rm T}, \\
        &\mbox {case 2: } Z = U_2 E W^{\rm T},
\end{align*}
where
\begin{equation*}
        E = {\rm diag}(e_{1}, e_{2}, \dots, e_{n}), \quad
        e_{i} = 10^{\beta(i-1)}, \quad
        \beta = \frac{\log_{10} \kappa(Z)}{n-1}.
\end{equation*}
Case 1 and case 2 provide a best case and a worst case with respect to the loss of $A$-orthogonality, respectively \cite{Lowery:2014}.
We set $m=100, n=20$ and test $28^2$ problems with $\kappa(A), \kappa(A^{1/2} Z) = 10^{0.5}, 10^1, 10^{1.5}, \dots, 10^{14}$ for each case.
\par
All the numerical experiments were carried out in MATLAB2016a.
We used Mersenne twister for generating random matrices.
\begin{figure}[!t]
\centering
\subfloat[MGS-naive]{
\includegraphics[bb = 0 180 595 642, scale=0.30]{./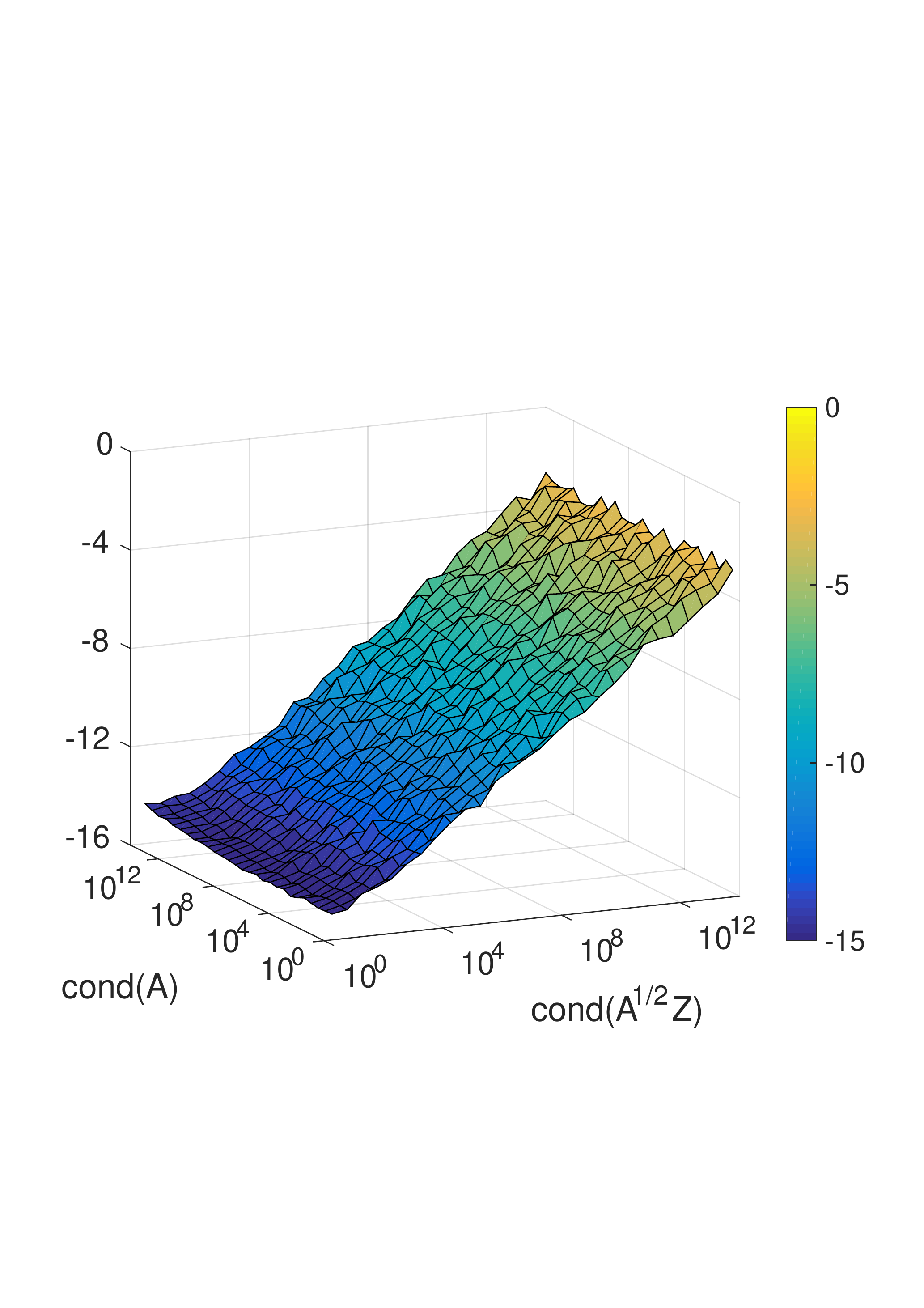}
}\\
\subfloat[MGS-HA]{
\includegraphics[bb = 0 180 595 642, scale=0.30]{./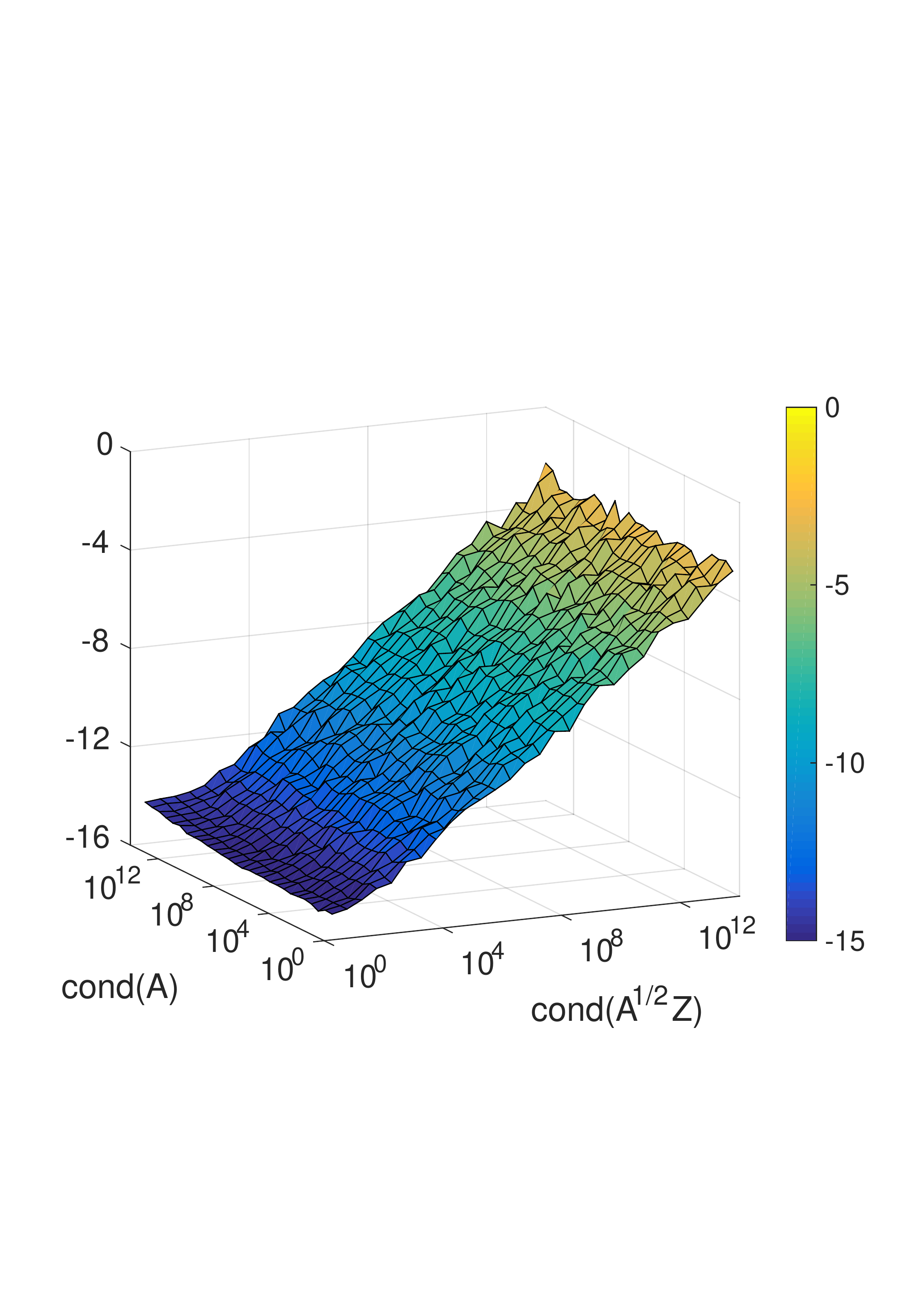}
}
\subfloat[MGS-HP]{
\includegraphics[bb = 0 180 595 642, scale=0.30]{./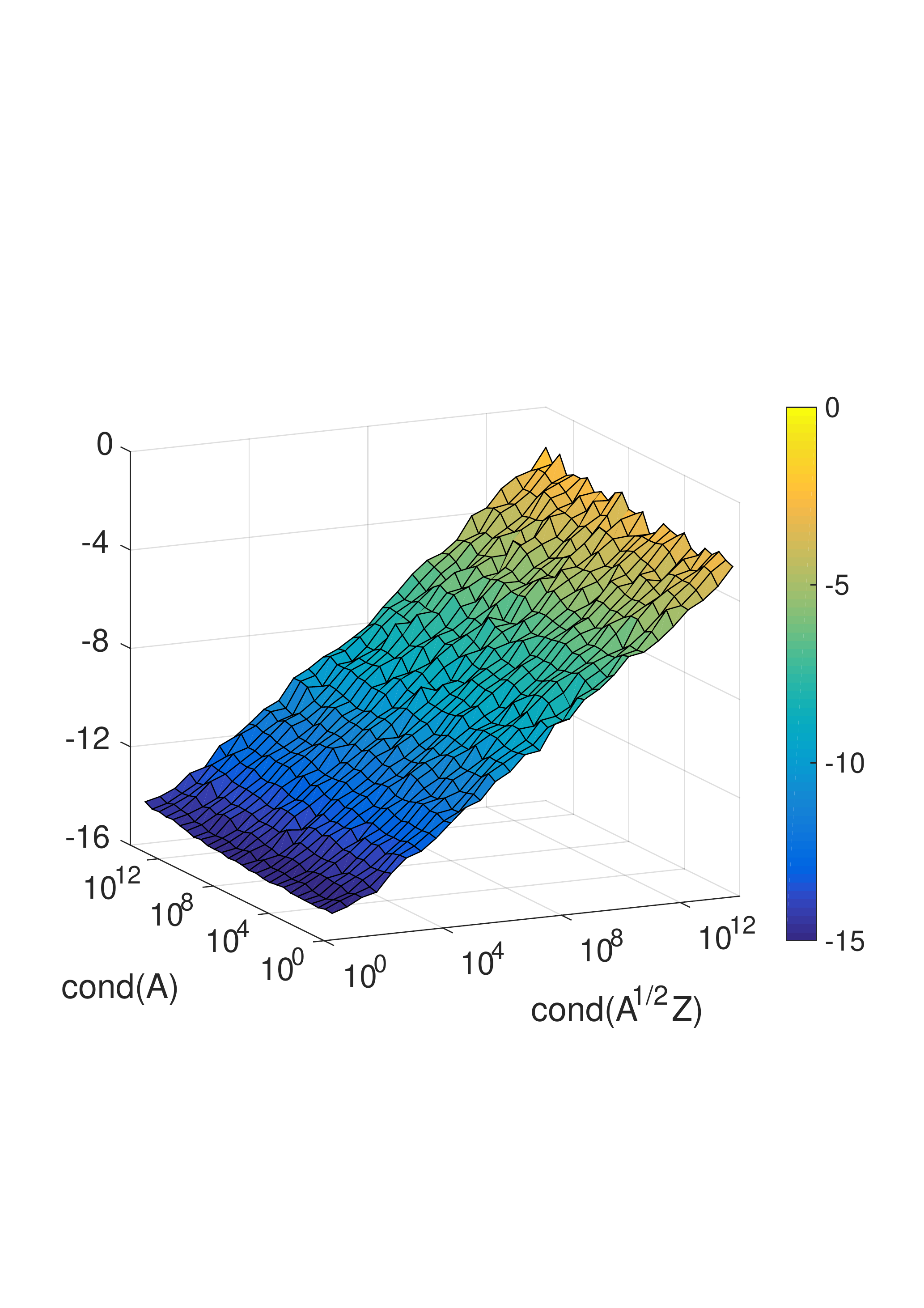}
}\\
\subfloat[Cholesky QR]{
\includegraphics[bb = 0 180 595 642, scale=0.30]{./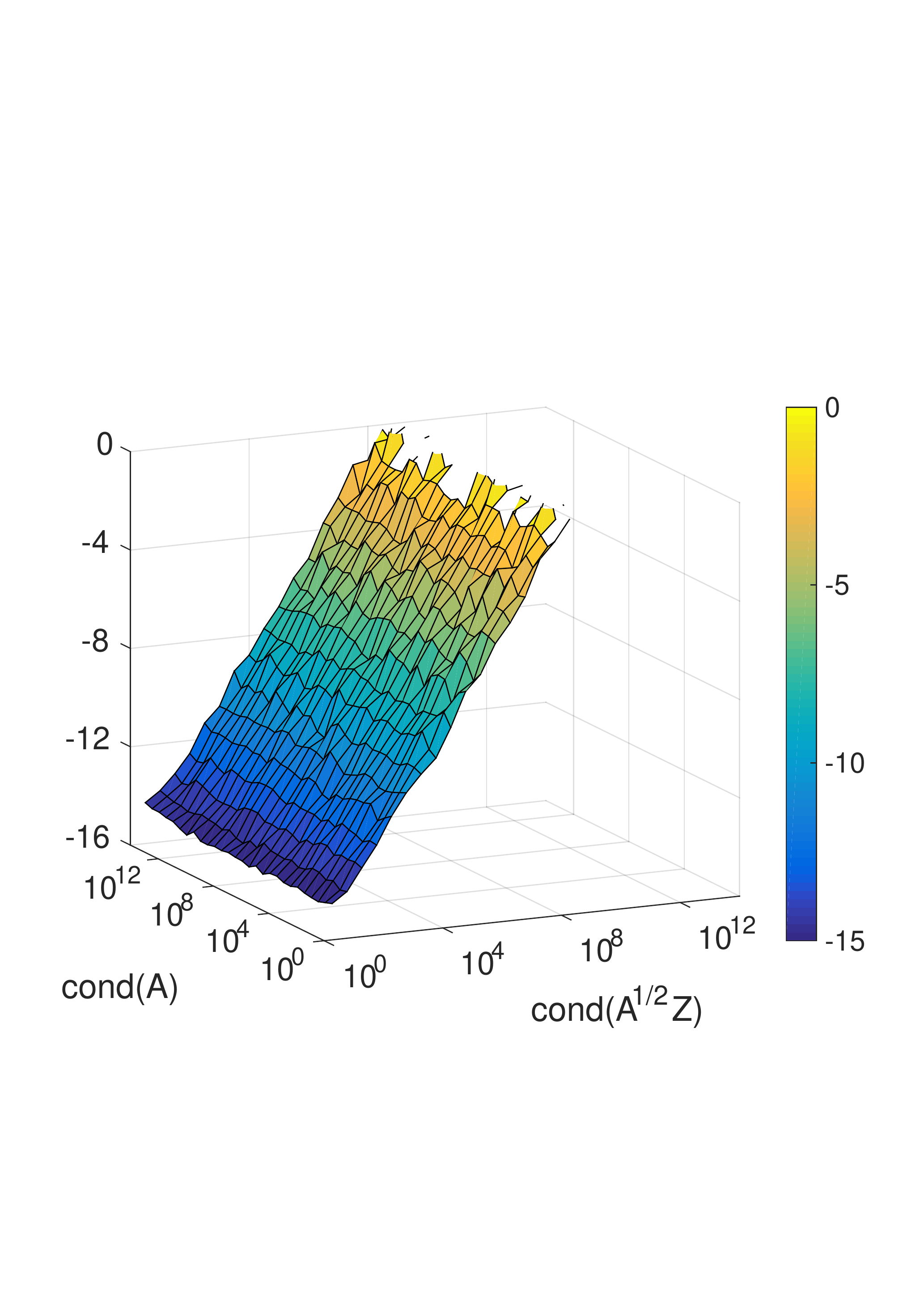}
}
\subfloat[CGS-naive]{
\includegraphics[bb = 0 180 595 642, clip, scale=0.30]{./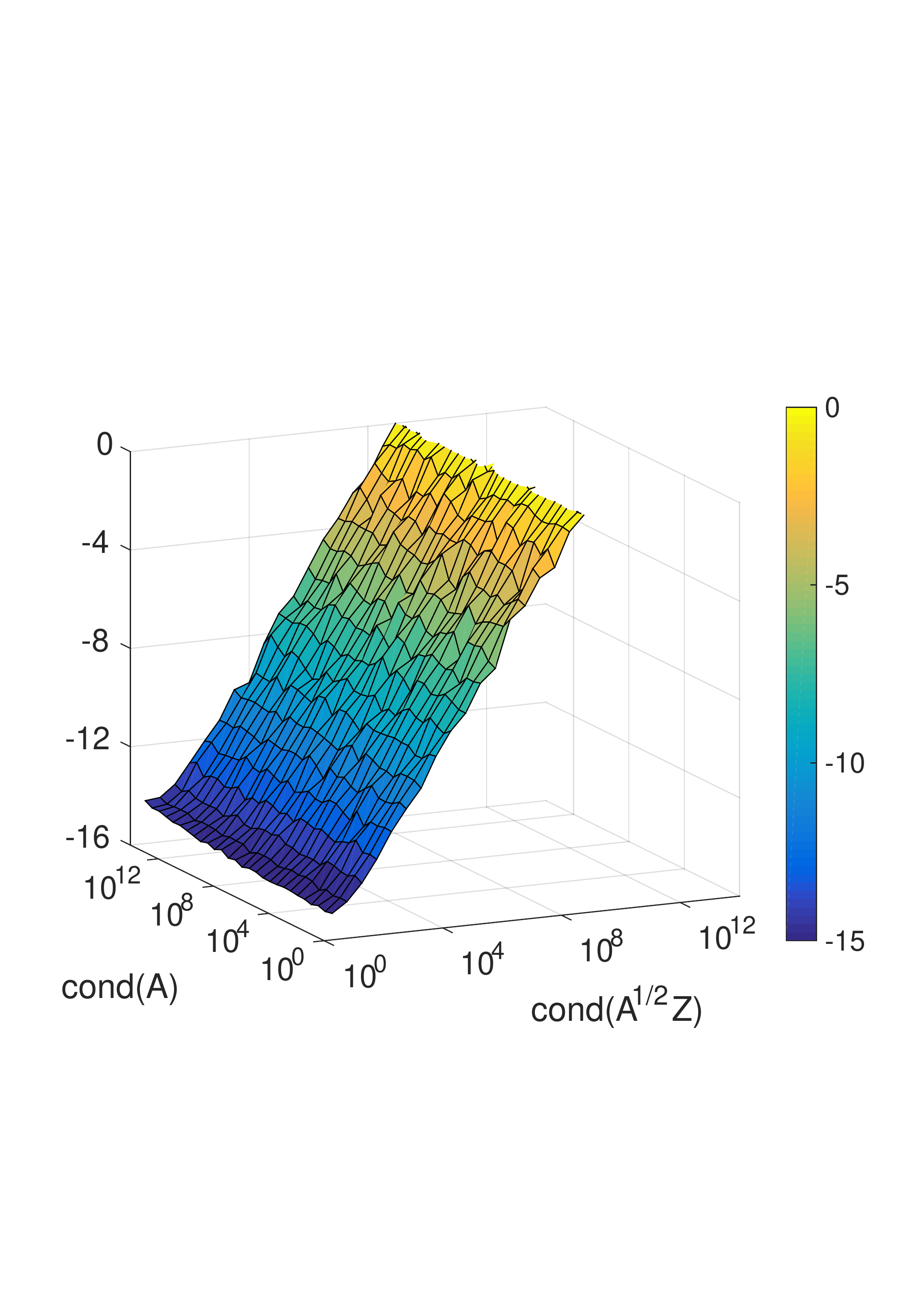}
}
\caption{
        Log10 of loss of $A$-orthogonality $\| \widehat{Q}^{\rm T} A \widehat{Q} - I_n\|$ for the case 1 that provides a best case bound.
}
\label{fig:case1}
\end{figure}
\begin{figure}[!t]
\centering
\subfloat[MGS-naive]{
\includegraphics[bb = 0 180 595 642, scale=0.30]{./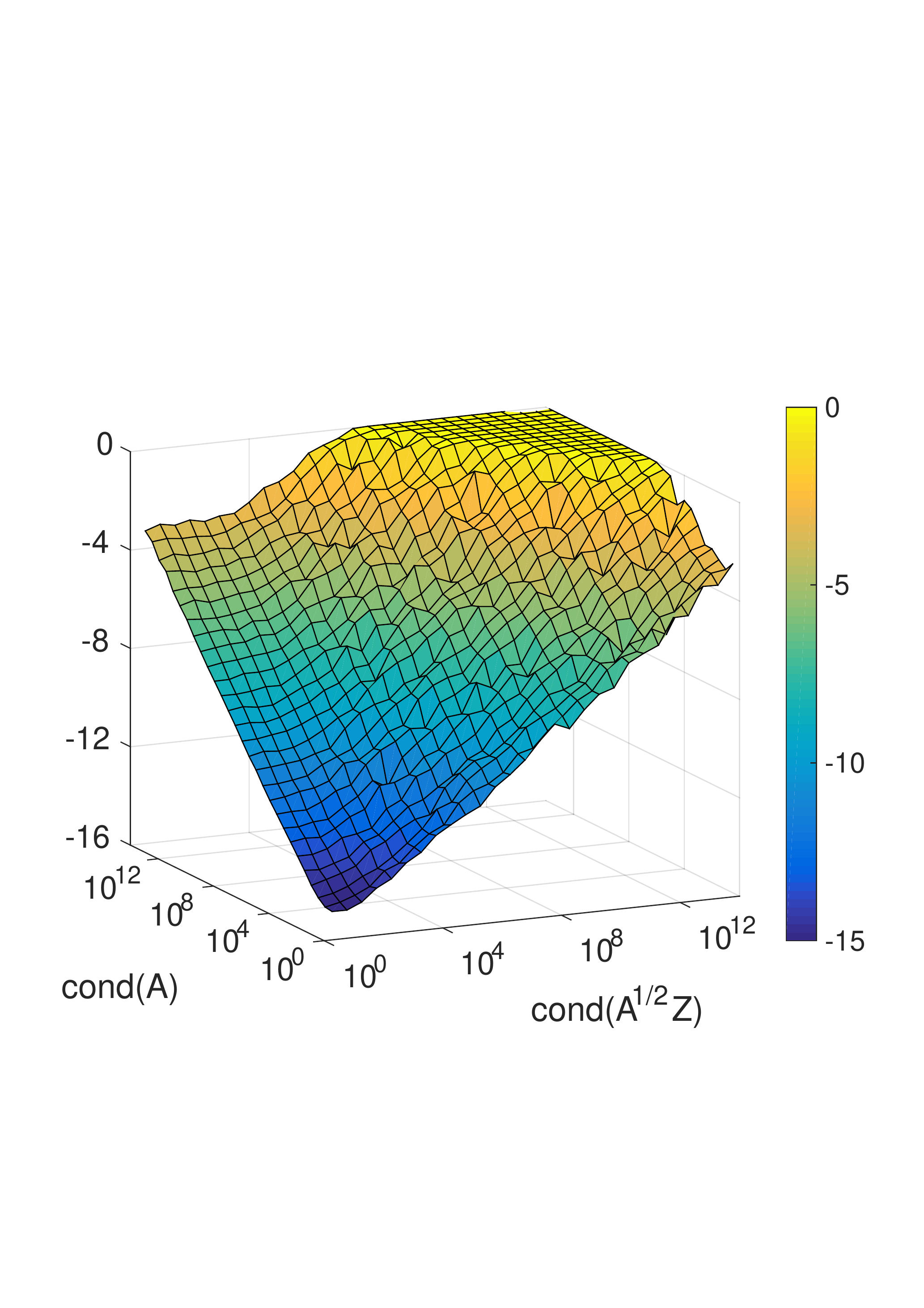}
}\\
\subfloat[MGS-HA]{
\includegraphics[bb = 0 180 595 642, scale=0.30]{./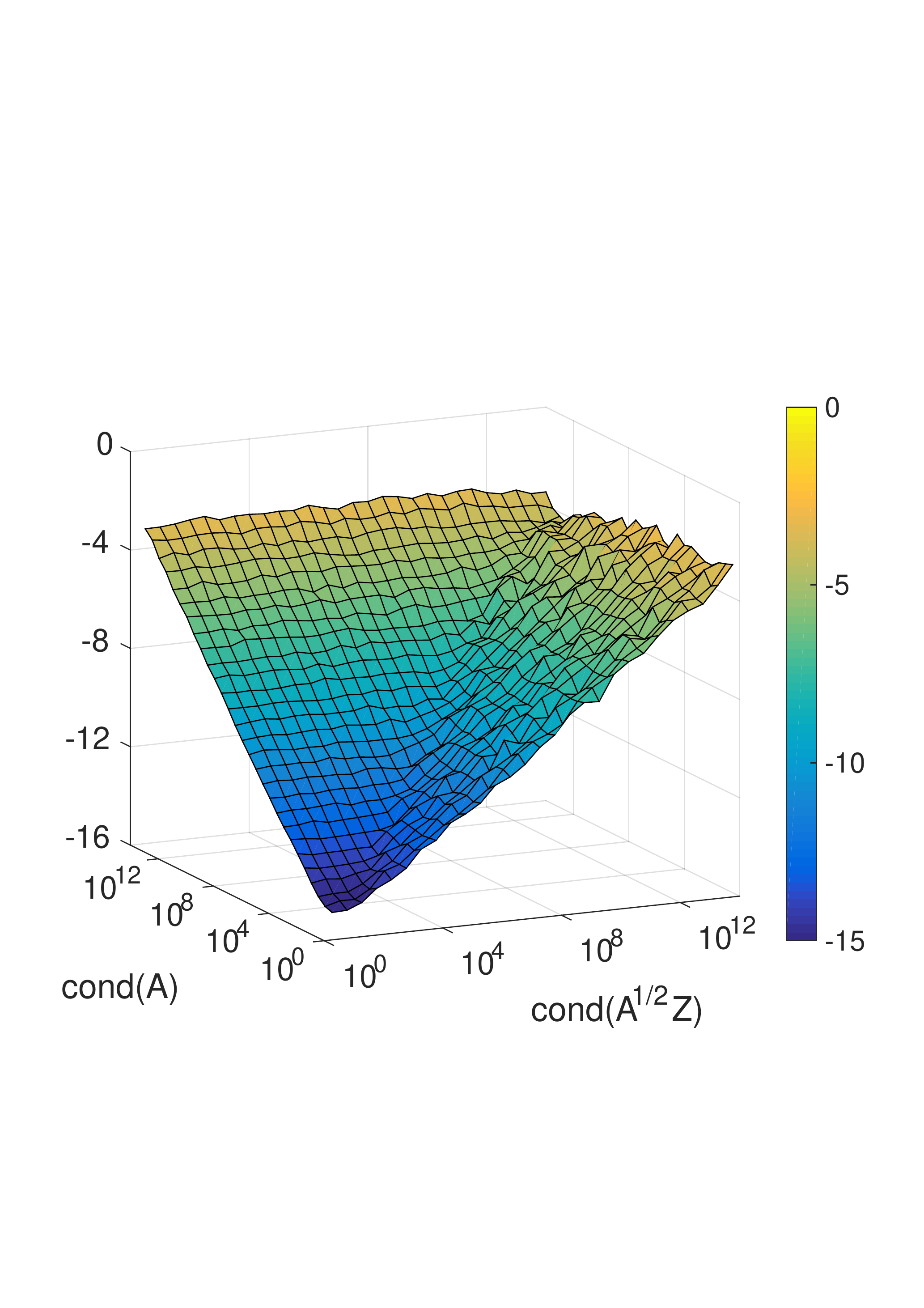}
}
\subfloat[MGS-HP]{
\includegraphics[bb = 0 180 595 642, scale=0.30]{./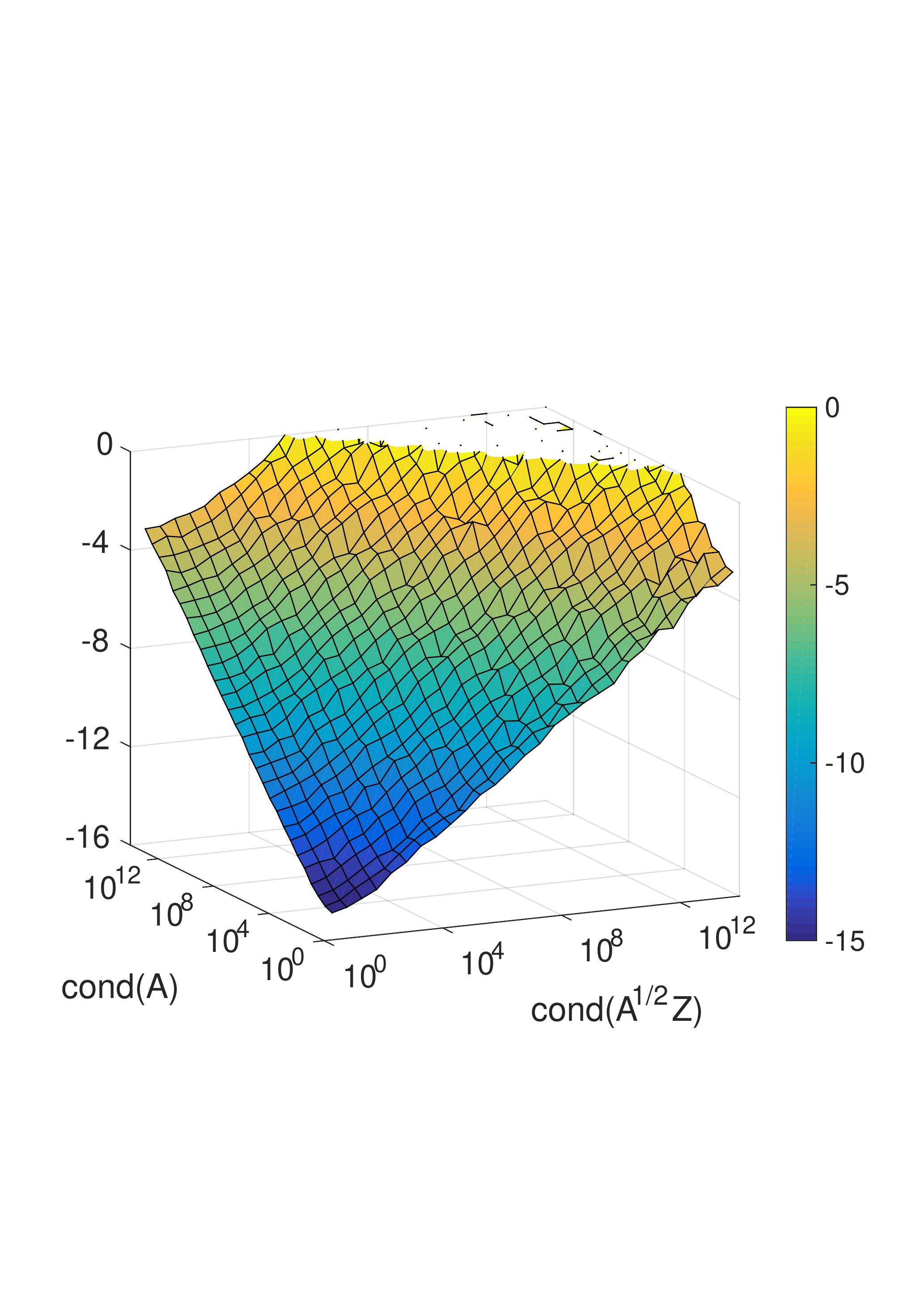}
}\\
\subfloat[Cholesky QR]{
\includegraphics[bb = 0 180 595 642, scale=0.30]{./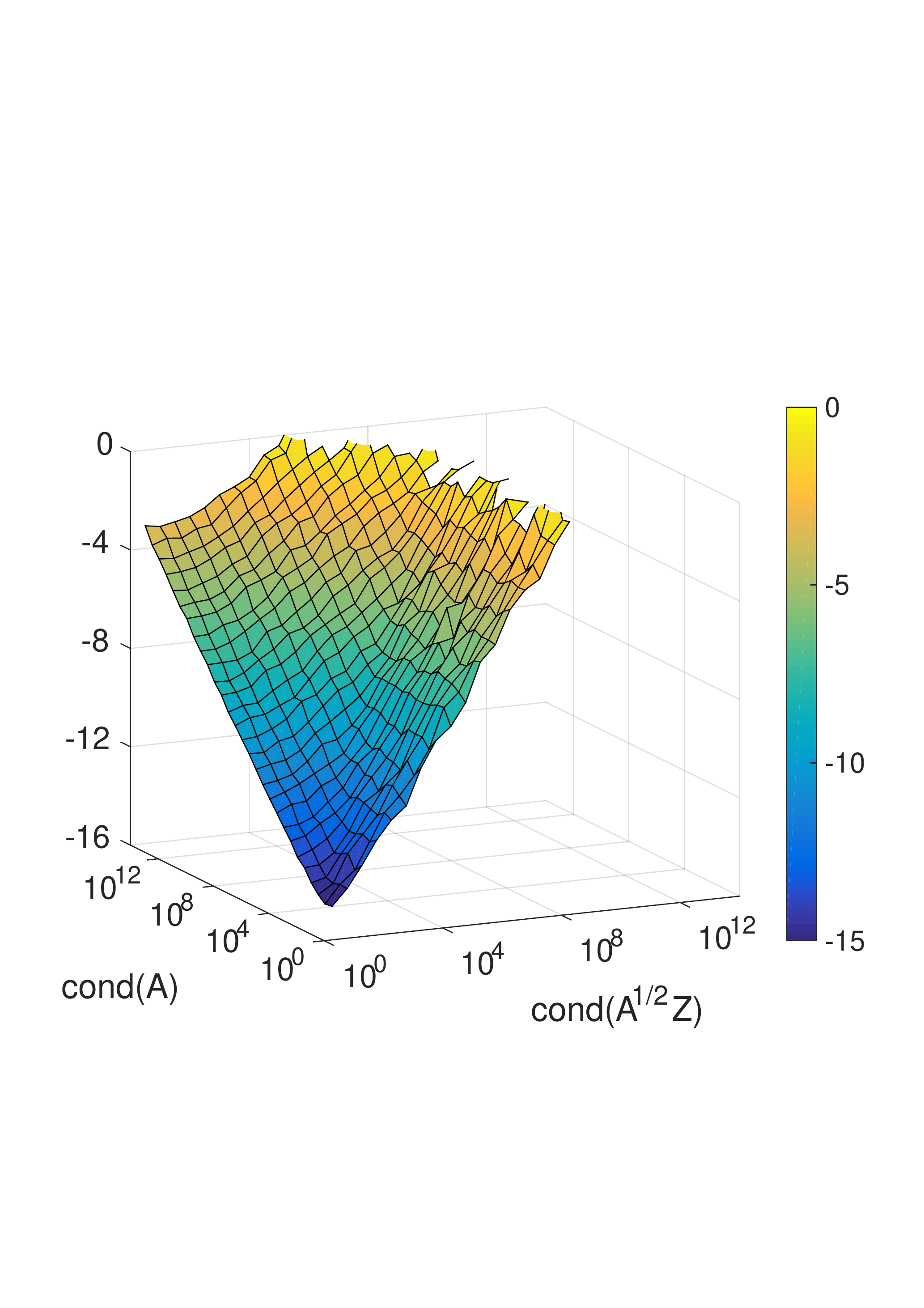}
}
\subfloat[CGS-naive]{
\includegraphics[bb = 0 180 595 642, scale=0.30]{./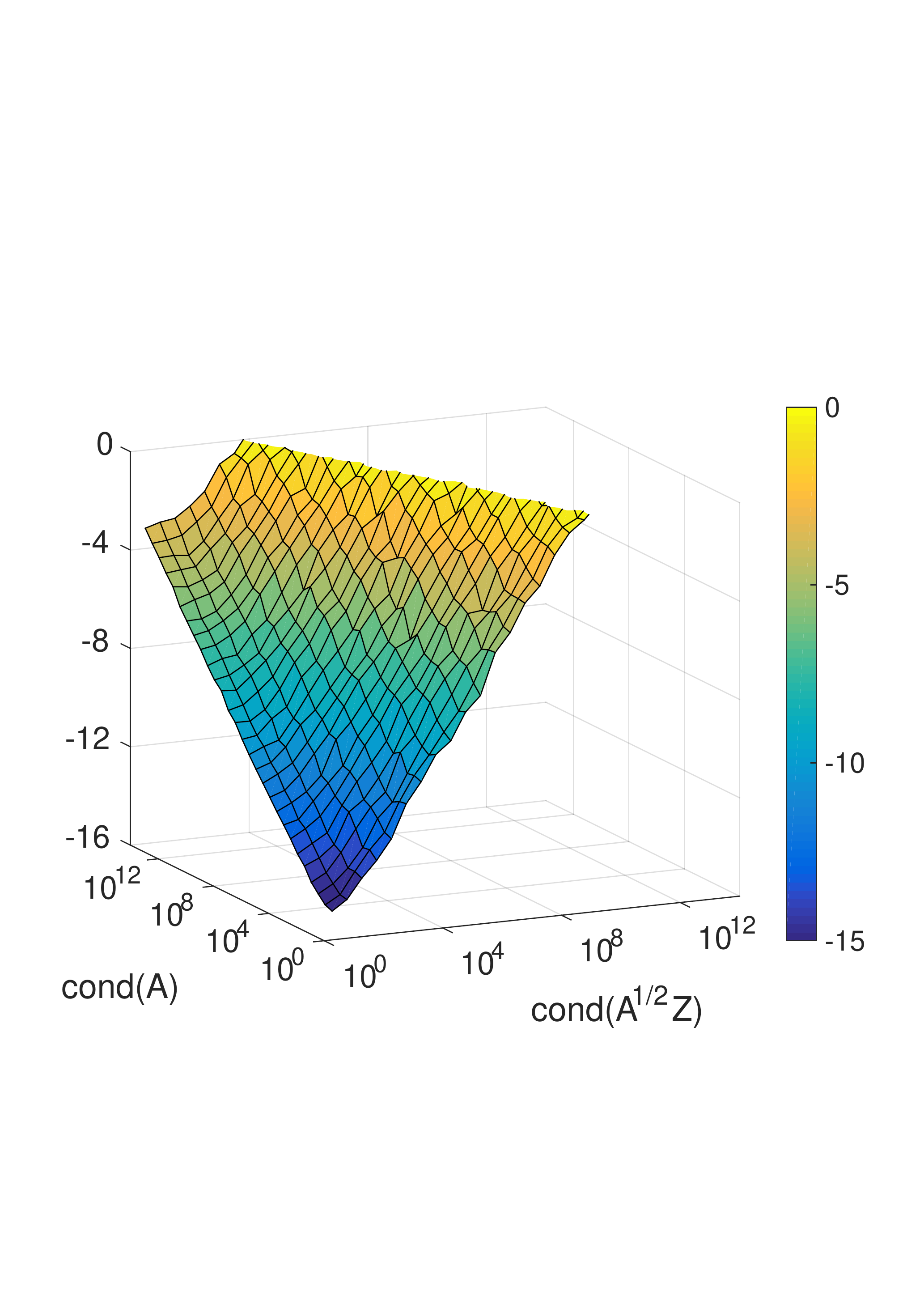}
}
\caption{
        Log10 of loss of $A$-orthogonality $\| \widehat{Q}^{\rm T} A \widehat{Q} - I_n\|$ for the case 2 that provides a worst case bound.
}
\label{fig:case2}
\end{figure}
\par
We present log10 of the loss of $A$-orthogonality as a function of $\kappa(A)$ and $\kappa(A^{1/2} Z)$ for case 1 and case 2 in Figures~\ref{fig:case1} and \ref{fig:case2}, respectively.
In case 1 (the best case), the loss of $A$-orthogonality of all methods depends only on $\kappa(A^{1/2} Z)$; in contrast, it depends on both $\kappa(A^{1/2} Z)$ and $\kappa(A)$ in case 2 (the worst case).
In both cases, MGS-naive, MGS-HA and MGS-HP show better accuracy than CGS-naive and Cholesky QR: the dependence on $\kappa(A^{1/2} Z)$ is linear for the former and quadratic for the latter.
Here, we note that Cholesky QR failed when $\kappa(A^{1/2} Z) \geq 10^8$.
\par
Next, we compare the proposed implementations, MGS-HA and MGS-HP, with MGS-naive.
MGS-HP shows nearly the same accuracy as MGS-naive in both cases and MGS-HA shows nearly the same accuracy as MGS-naive in case 1.
In addition, as a remarkable result, we observe that MGS-HA shows better accuracy than MGS-naive in case 2, especially when both $A$ and $Z$ are ill-conditioned: $\kappa(A), \kappa(A^{1/2} Z) \gg 1$; see Figure~\ref{fig:case2}(b).
%
\subsection{Numerical experiment III}
\begin{figure}[!t]
\centering
\includegraphics[bb = 0 180 595 642, scale=0.30]{./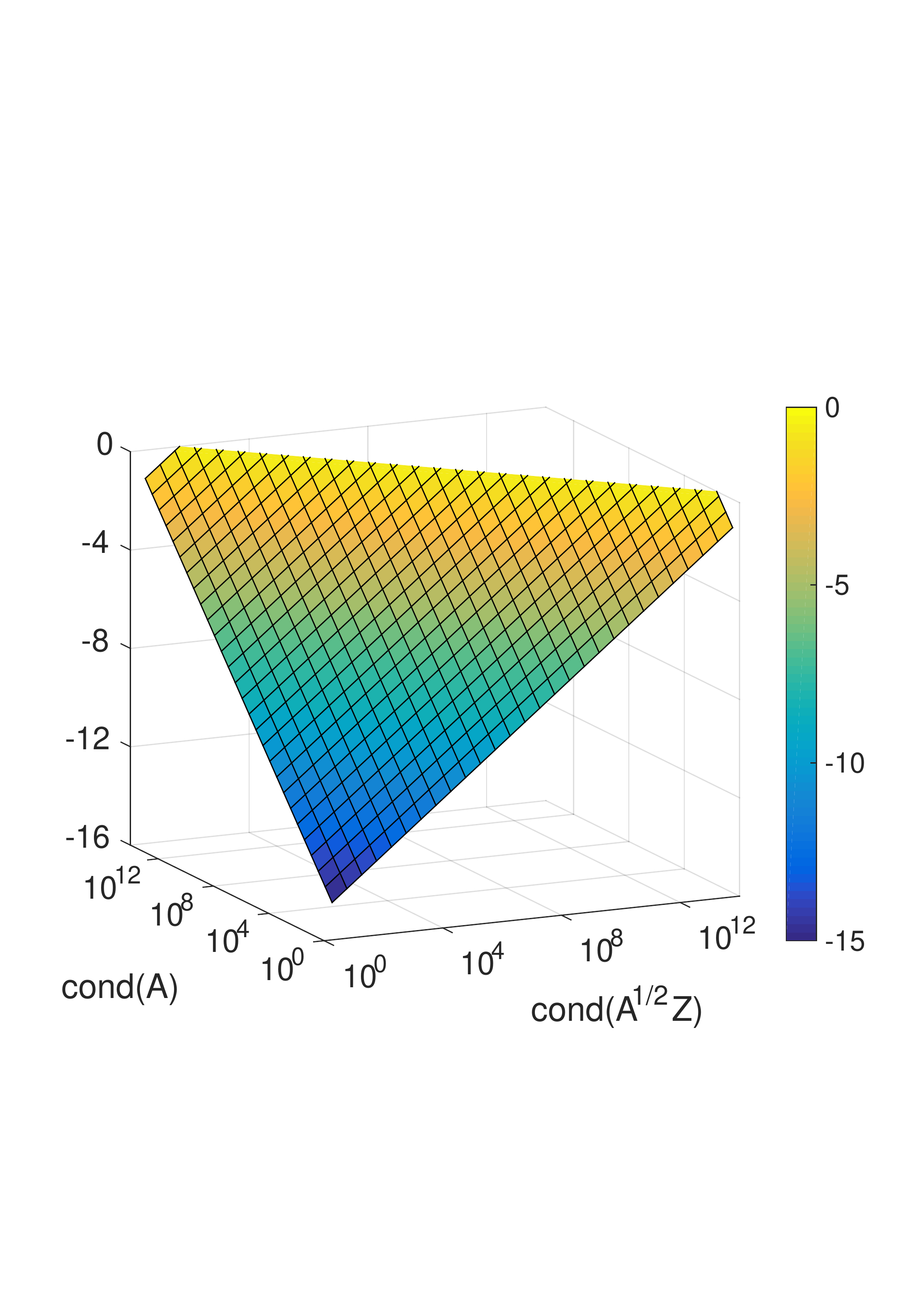}
\caption{
        Log10 of $\delta_1 := {\bf u} \kappa(A) \kappa(A^{1/2}Z)$.
}
\label{fig:thm}
%
\vspace*{12pt}
\centering
\subfloat[MGS-naive]{
\includegraphics[bb = 0 0 360 216, scale=0.5]{./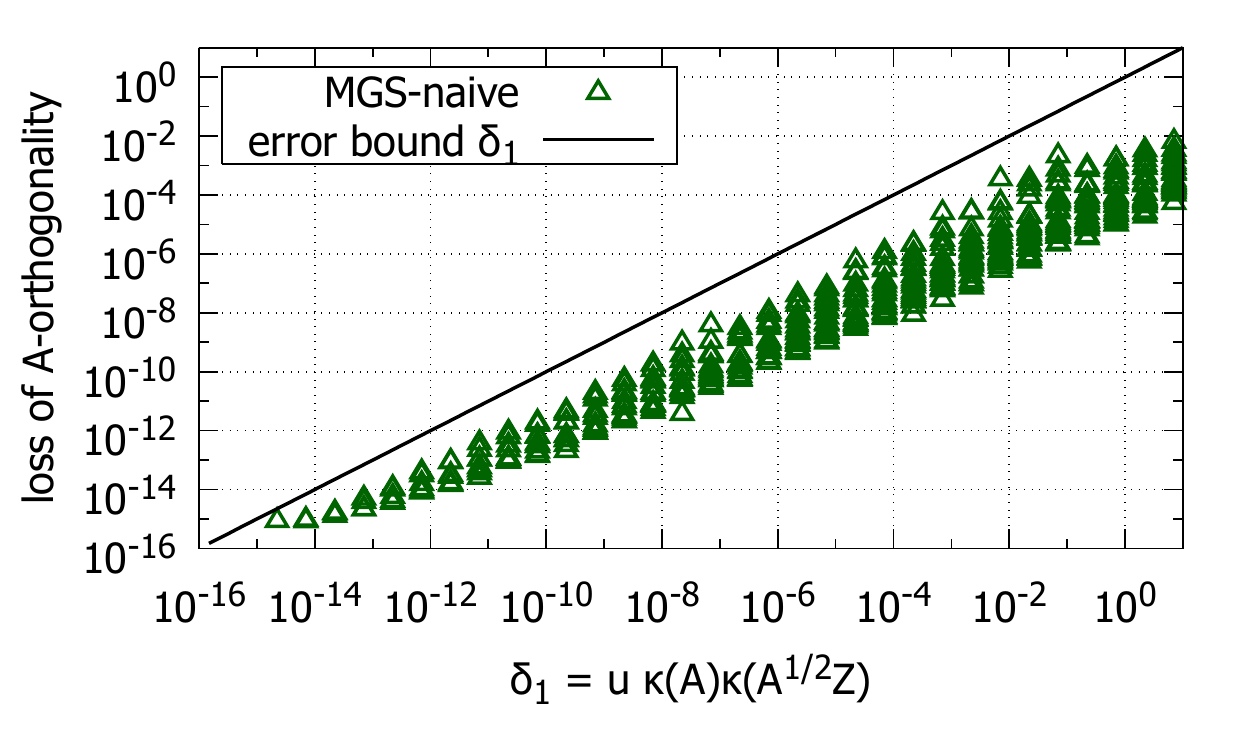}
}\\
\subfloat[MGS-HA]{
\includegraphics[bb = 0 0 360 216, scale=0.5]{./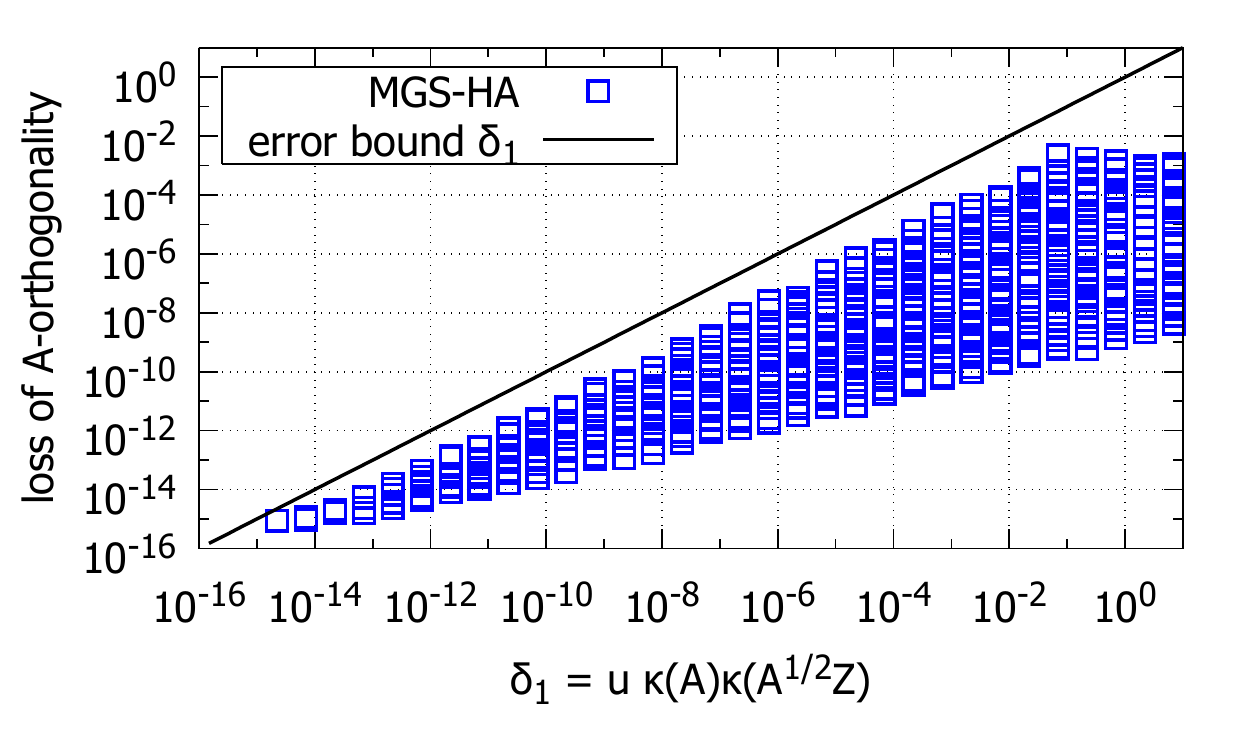}
}
\subfloat[MGS-HP]{
\includegraphics[bb = 0 0 360 216, scale=0.5]{./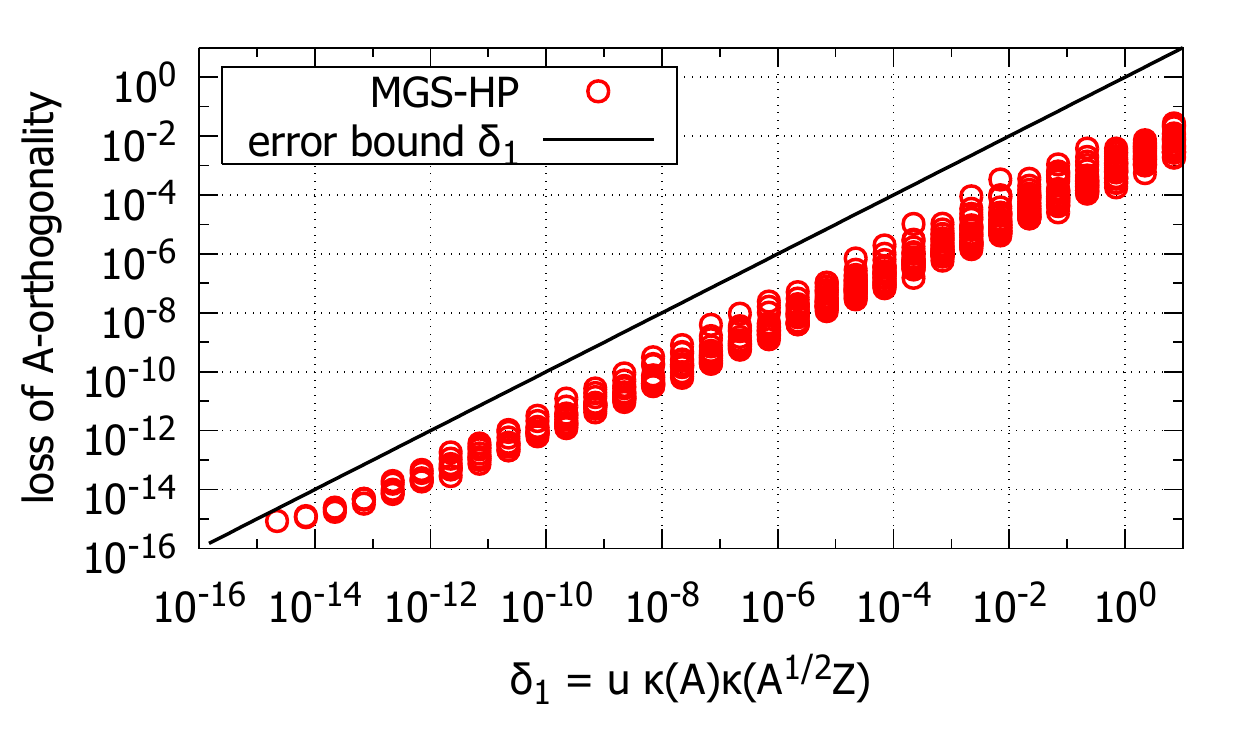}
}
\caption{
        Comparison between $\delta_1 := {\bf u} \kappa(A) \kappa(A^{1/2}Z)$ and the loss of $A$-orthogonality.
}
\label{fig:bound}
\end{figure}
Here, we compare the loss of $A$-orthogonality of the computed results of MGS-naive, MGS-HA and MGS-HP in case 2 with the theoretical error bound \eqref{eq:bound_a} derived in Section~\ref{sec:error}.
As shown in Section~\ref{sec:error}, the loss of $A$-orthogonality of MGS-naive and MGS-HA are bounded by
\begin{equation*}
        \| \widehat{Q}^{\rm T} A \widehat{Q} - I_n \| \leq \delta_1, \quad
        \delta_1 := {\bf u} \kappa(A)\kappa(A^{1/2}Z).
\end{equation*}
Figure~\ref{fig:thm} shows the upper bound $\delta_1$ as a function of $\kappa(A)$ and $\kappa(A^{1/2}Z)$, while Figure~\ref{fig:bound} plots the actual loss of $A$-orthogonality against $\delta_1$.
Comparing Figures~\ref{fig:case2}(a), (c) and \ref{fig:thm} reveals that $\delta_1$ represents the actual loss of $A$-orthogonality well for MGS-naive and MGS-HP.
In fact, we can see from Figure~\ref{fig:bound}(a), (c) that $\delta_1$ is not only an upper bound, but also a good estimate of the actual loss of $A$-orthogonality for MGS-naive and MGS-HP.
For MGS-HA, however, there are many computational results for which the loss of $A$-orthogonality is much lower than suggested by $\delta_1$; see Figure~\ref{fig:bound}(b).
This indicates that although $\delta_1$ is certainly an upper bound for MGS-HA, it may not be a sharp upper bound.
\begin{figure}[!t]
\centering
\includegraphics[bb = 0 180 595 642, scale=0.30]{./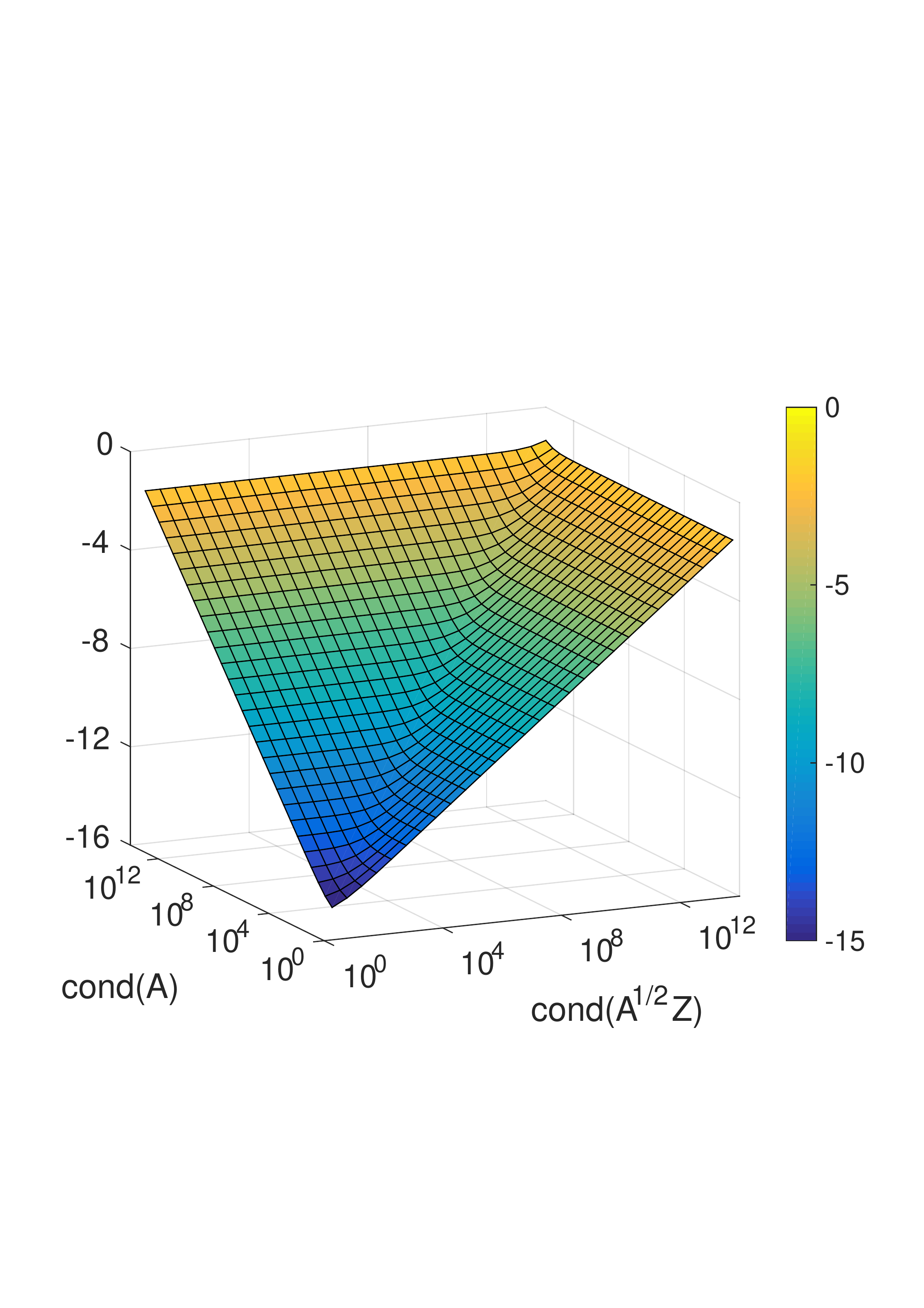}
\caption{
        Log10 of $\delta_2 := {\bf u} \left(\kappa(A) + \kappa(A^{1/2}Z) \right)$.
}
\label{fig:thm2}
%
\vspace*{12pt}
\centering
\subfloat[MGS-naive]{
\includegraphics[bb = 0 0 360 216, scale=0.5]{./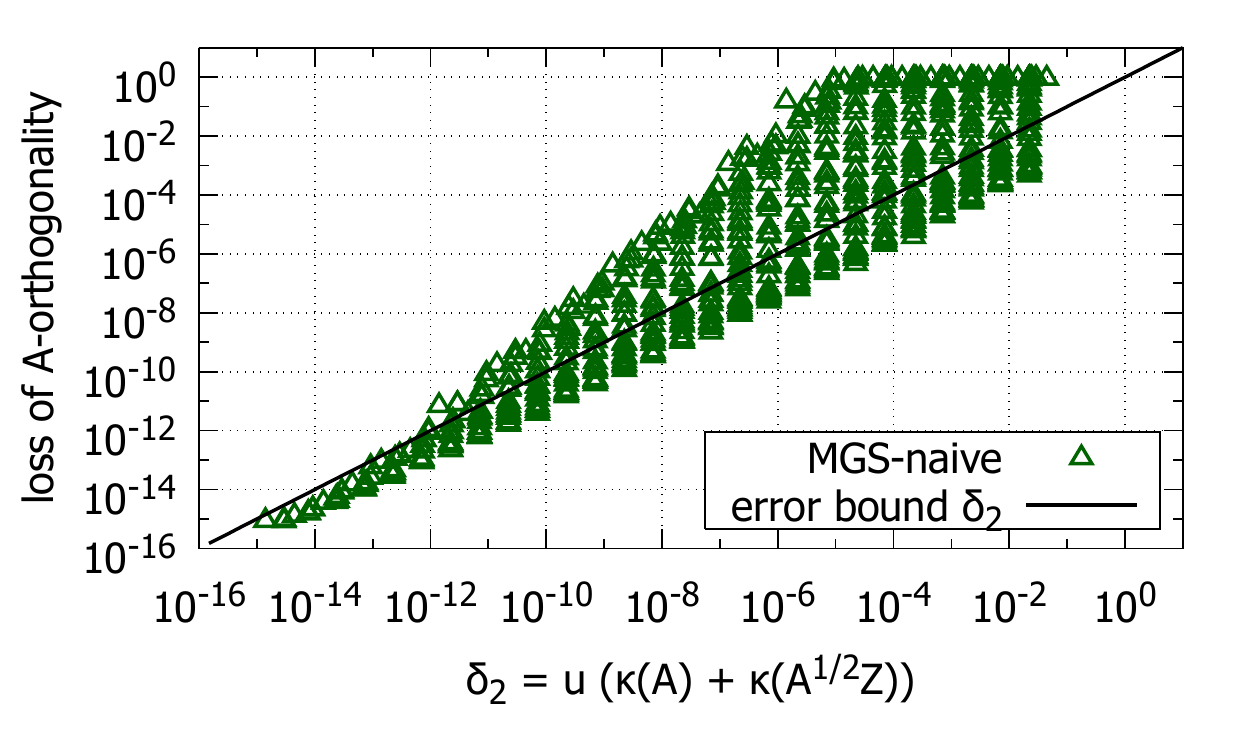}
}\\
\subfloat[MGS-HA]{
\includegraphics[bb = 0 0 360 216, scale=0.5]{./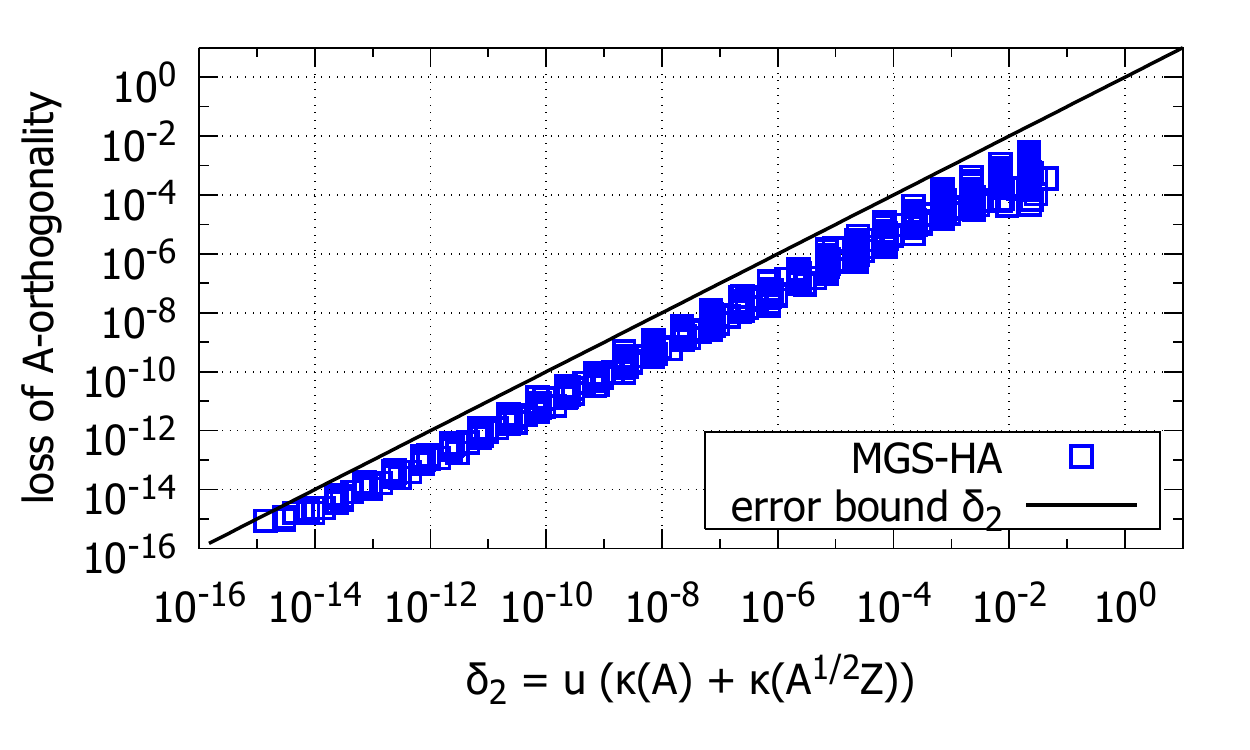}
}
\subfloat[MGS-HP]{
\includegraphics[bb = 0 0 360 216, scale=0.5]{./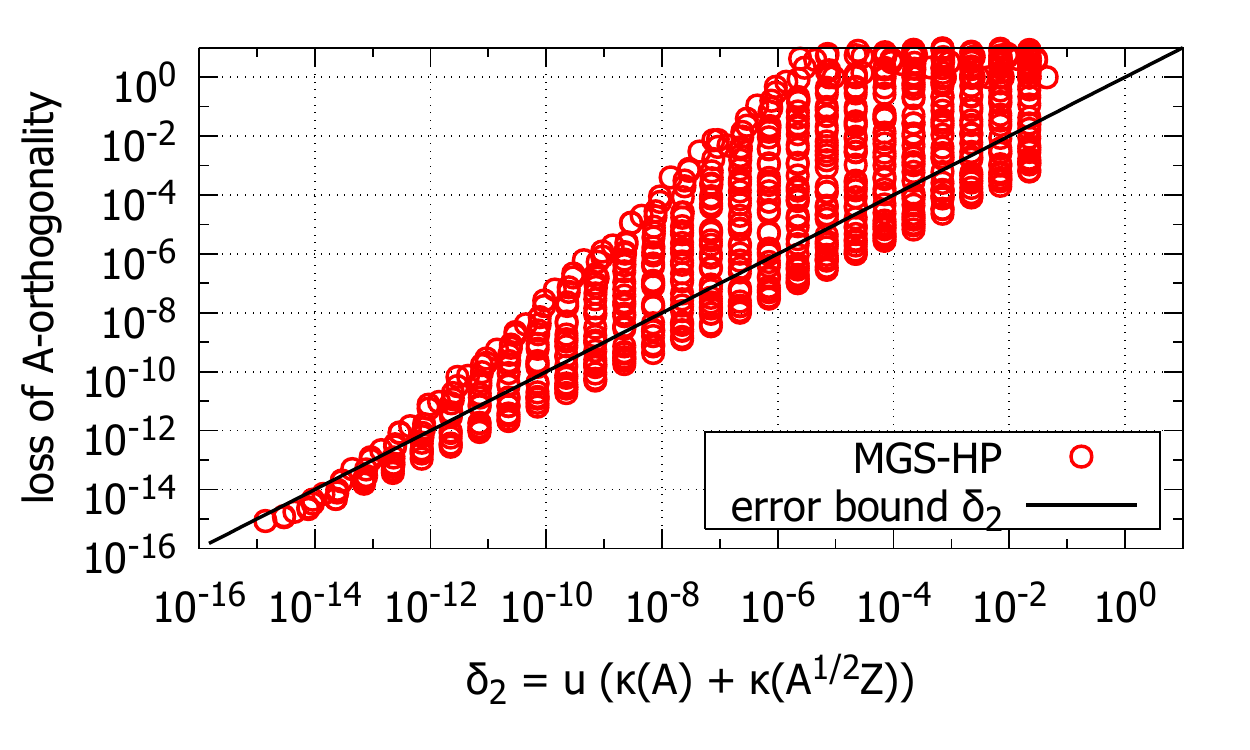}
}
\caption{
        Comparison between $\delta_2 := {\bf u} \left( \kappa(A) + \kappa(A^{1/2}Z)\right)$ and the loss of $A$-orthogonality.
}
\label{fig:bound2}
\end{figure}
\par
Instead of $\delta_1$, let us consider the following quantity:
\begin{equation*}
        \delta_2 := {\bf u} \left( \kappa(A) + \kappa(A^{1/2}Z) \right).
\end{equation*}
Figure~\ref{fig:thm2} shows $\delta_2$ as a function of $\kappa(A)$ and $\kappa(A^{1/2}Z)$, while Figure~\ref{fig:bound2} plots the actual loss of $A$-orthogonality against $\delta_2$.
Comparing Figures~\ref{fig:case2}(b) and \ref{fig:thm2}, we can see that $\delta_2$ represents the computational results of MGS-HA well.
We also observe, from Figure~\ref{fig:thm2}(b), that $\delta_2$ is a very sharp upper bound for MGS-HA.
It is to be noted that $\delta_2$ does not serve as an upper bound of the loss of $A$-orthogonality for MGS-naive and MGS-HP; see Figure~\ref{fig:bound2}(a), (c).
\par
Although the quantity $\delta_2$ we introduced here has no theoretical background yet, the numerical results suggest that it can describe the actual loss of $A$-orthogonality for MGS-HA very well. Based on this fact, we make the following conjecture on a sharper upper bound for the loss of $A$-orthogonality of MGS-HA.
\begin{conjecture}
        The loss of $A$-orthogonality of MGS-HA can be bounded as
        \begin{equation*}
        \| \widehat Q^{\rm T} A \widehat{Q} - I_n \| \leq
        \frac{\mathcal{O}(m^{3/2}) {\bf u} \left( \kappa(A) + \kappa(A^{1/2}Z) \right)}{1-\mathcal{O}(m^{3/2}) {\bf u} \left( \kappa(A) + \kappa(A^{1/2}Z)\right)} 
        \approx \mathcal{O}(m^{3/2}) {\bf u} \left( \kappa(A) + \kappa(A^{1/2}Z) \right).
        \end{equation*}
\end{conjecture}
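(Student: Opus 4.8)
The plan is to explain the gap between the product bound $\kappa(A)\kappa(A^{1/2}Z)$ in \eqref{eq:bound_a} and the conjectured sum $\kappa(A)+\kappa(A^{1/2}Z)$ by a structural consistency that is peculiar to MGS-HA: a single computed matrix--vector product $\widehat{\bm x}_i^{(i-1)}$ is reused for both the normalization $\widehat r_{ii}$ and the projection vector $\widehat{\bm p}_i$, whereas MGS-naive forms $A\widehat{\bm z}_i^{(i-1)}$ (for the diagonal) and $A\widehat{\bm q}_i$ (for $\widehat{\bm p}_i$) by two \emph{independent} MVs. First I would make this quantitative. Writing $\widetilde{\bm q}_i := \widehat{\bm z}_i^{(i-1)}/\widehat r_{ii}$ for the exactly normalized vector, Eqs.~\eqref{eq:Imakura1_f}--\eqref{eq:Imakura2_f} give $\widehat{\bm p}_i = \widehat{\bm x}_i^{(i-1)}/\widehat r_{ii}$ up to $\mathcal{O}({\bf u})$ scaling error, and since $\widehat r_{ii}^2 = (\widehat{\bm z}_i^{(i-1)})^{\rm T}\widehat{\bm x}_i^{(i-1)}$ uses the same $\widehat{\bm x}_i^{(i-1)}$, one obtains the exact identity $\widetilde{\bm q}_i^{\rm T}\widehat{\bm p}_i = 1$ in which the MV error has cancelled. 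This cancellation on the diagonal of $\widehat P^{\rm T}\widehat Q$, with $\widehat P := [\widehat{\bm p}_1,\dots,\widehat{\bm p}_n]$, is precisely what MGS-naive lacks, where $\widehat{\bm q}_i^{\rm T}\widehat{\bm p}_i = 1 + O({\bf u}\|A\|)$ carries an uncancelled MV error.

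The core step is to reinterpret MGS-HA as backward-equivalent to \emph{exact} MGS in a perturbed inner product. I would seek a symmetric $\widehat A = A + \Delta A$ with $\|\Delta A\| \le \mathcal{O}(m^{3/2}){\bf u}\|A\|$ such that the computed quantities satisfy, to leading order, $\widehat{\bm p}_i = \widehat A\widehat{\bm q}_i$ and $\widehat r_{ii} = \|\widehat{\bm z}_i^{(i-1)}\|_{\widehat A}$. If such an $\widehat A$ exists, then MGS-HA realizes an exact MGS factorization in the $\widehat A$-inner product, i.e.\ standard MGS applied to $\widehat A^{1/2}Z$, so the classical loss-of-orthogonality bound gives $\|\widehat Q^{\rm T}\widehat A\widehat Q - I_n\| \lesssim \mathcal{O}(m^{3/2}){\bf u}\,\kappa(\widehat A^{1/2}Z)$ with \emph{no} separate factor of $\kappa(\widehat A)$, because in this view the inner products are formed exactly. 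The passage back to the $A$-inner product is additive,
\begin{equation*}
\widehat Q^{\rm T}A\widehat Q - I_n = \left(\widehat Q^{\rm T}\widehat A\widehat Q - I_n\right) + \widehat Q^{\rm T}\Delta A\,\widehat Q,
\end{equation*}
and the last term is bounded by $\|\widehat Q\|^2\|\Delta A\| \le \mathcal{O}(m^{3/2}){\bf u}\,\kappa(A)$, using $\|\widehat Q\| \lesssim \lambda_{\min}(A)^{-1/2}$ exactly as in the derivation of \eqref{eq:bound_a}. Adding the two contributions and noting $\kappa(\widehat A^{1/2}Z)\approx\kappa(A^{1/2}Z)$ would give the conjectured bound.

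The main obstacle is constructing a single \emph{symmetric} $\widehat A$. The constraints $(\widehat A - A)\widehat{\bm q}_i = \widehat{\bm p}_i - A\widehat{\bm q}_i =: {\bm e}_i$ are $n$ linear conditions on $\Delta A$, but the residuals ${\bm e}_i$ coming from the per-column MV errors $\Delta{\bm x}_i^{(i-1)}$ in \eqref{eq:Imakura1_f} are generically asymmetric, $\widehat{\bm q}_i^{\rm T}{\bm e}_j \neq \widehat{\bm q}_j^{\rm T}{\bm e}_i$, so no symmetric $\widehat A$ reproduces all the $\widehat{\bm p}_i$ exactly. I would therefore split $\Delta E^{(2)}$ in \eqref{eq:E2definition} into a symmetric part, absorbed into $\widehat A$ and contributing $\kappa(A)$ additively by the argument above, and an antisymmetric residual of size $\mathcal{O}({\bf u}\|A\|)$. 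The remaining task is to show that this residual, after multiplication by $\widehat R^{-1}$ in \eqref{eq:E3bound}, contributes only at the level of the existing $\kappa(A^{1/2}Z)$ term rather than regenerating the product $\kappa(A)\kappa(A^{1/2}Z)$; the most promising route is to use the diagonal identity $\widetilde{\bm q}_i^{\rm T}\widehat{\bm p}_i = 1$ to show that the MV-error contribution lies predominantly in the direction annihilated by the projection defining $\Delta E^{(3)}$. Pinning down this last cancellation is the delicate point on which the conjecture hinges, and is presumably why the authors leave it unproved.
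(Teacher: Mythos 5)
First, a point of reference: the paper does \emph{not} prove this statement. It is stated as Conjecture~1 precisely because the authors could not prove it; the only support offered is numerical (the quantity $\delta_2$ tracks the observed loss of $A$-orthogonality of MGS-HA in Figures~\ref{fig:thm2} and \ref{fig:bound2}), the paper says explicitly that $\delta_2$ ``has no theoretical background yet,'' and the proof is listed as future work. So your proposal must stand entirely on its own, and it does not: beyond the gap you concede in your last paragraph, its central reduction is unsound.

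The unsound step is the claim that, once MGS-HA is reinterpreted as exact MGS in a perturbed inner product $\widehat A = A+\Delta A$, the classical analysis yields $\|\widehat Q^{\rm T}\widehat A\widehat Q - I_n\| \lesssim \mathcal{O}(m^{3/2}){\bf u}\,\kappa(\widehat A^{1/2}Z)$ ``with no separate factor of $\kappa(A)$, because in this view the inner products are formed exactly.'' The product $\kappa(A)\kappa(A^{1/2}Z)$ in \eqref{eq:bound_a} is not generated by the inner-product/MV errors alone. In the decomposition \eqref{eq:E3bound}--\eqref{eq:Deltaydefinition}, the AXPY and scaling errors $\Delta{\bm d}_j^{(k)}$ --- which are identical in MGS-naive and MGS-HA and are untouched by any reinterpretation of how ${\bm p}_i$ is formed --- enter through the terms $(\widehat{\bm q}_i,\Delta{\bm d}_j^{(k)})_A$. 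These perturbations are Euclidean-small, $\|\Delta{\bm d}_j^{(k)}\| \lesssim {\bf u}\,\|\widehat{\bm z}_j^{(k-1)}\|$, but they are measured in the $A$-norm, so $|(\widehat{\bm q}_i,\Delta{\bm d}_j^{(k)})_A| \lesssim {\bf u}\,\|A\|^{1/2}\|\widehat{\bm z}_j^{(k-1)}\|$; after the amplification by $\|\widehat R^{-1}\|\approx 1/\sigma_{\min}(A^{1/2}Z)$ in \eqref{eq:E3bound} they contribute a term of order ${\bf u}\,\kappa(A)^{1/2}\kappa(A^{1/2}Z)$. The same obstruction appears in your transformed picture: in $\widehat A^{1/2}$-coordinates, MGS-HA is MGS with exact inner products but with local AXPY/scaling perturbations whose \emph{relative} size is not ${\bf u}$ but up to ${\bf u}\,\|A\|^{1/2}\|\widehat{\bm z}_j^{(i-1)}\|/\|\widehat{\bm z}_j^{(i-1)}\|_A \le {\bf u}\,\kappa(A)^{1/2}$, so the classical bound delivers $\mathcal{O}(m^{3/2}){\bf u}\,\kappa(A)^{1/2}\kappa(A^{1/2}Z)$ --- still a product, and not dominated by the conjectured sum (take $\kappa(A)=\kappa(A^{1/2}Z)=10^6$: the product is $10^9$, the sum is $2\cdot 10^6$). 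Hence, even if your symmetric backward matrix $\widehat A$ existed, the route would not reach the additive bound: any proof of the conjecture must exhibit cancellation in the AXPY/scaling contributions as well, whereas your plan targets exclusively the MV term.

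The difficulties you do flag are also real and fatal to calling this a proof: no symmetric $\widehat A$ satisfying all $n$ constraints exists (as you note), and the assertion that the antisymmetric residual ``lies predominantly in the direction annihilated by the projection'' is exactly the unproven crux, stated as hope rather than argument. A smaller overstatement in the same spirit: the identity $\widetilde{\bm q}_i^{\rm T}\widehat{\bm p}_i = 1$ is not exact. Only the MV error $\Delta{\bm x}_i^{(i-1)}$ of \eqref{eq:Imakura1_f} cancels; the rounding error in forming $\widehat r_{ii}^2$ and the two scaling errors survive, and relative to $\widehat r_{ii}^2\approx\|\widehat{\bm z}_i^{(i-1)}\|_A^2$ the inner-product error can be as large as $\gamma_m\|\widehat{\bm z}_i^{(i-1)}\|\,\|A\widehat{\bm z}_i^{(i-1)}\|/\|\widehat{\bm z}_i^{(i-1)}\|_A^2 = \mathcal{O}(\gamma_m\kappa(A)^{1/2})$. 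The reuse-of-$\widehat{\bm x}_i^{(i-1)}$ observation is a genuine structural difference between MGS-HA and MGS-naive and is a plausible seed for explaining Figures~\ref{fig:bound} and \ref{fig:bound2}, but as it stands the proposal neither proves the conjecture nor isolates the mechanism that would.
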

\section{Conclusions}
\label{sec:conclusion}
In this paper, we propose two types of efficient implementations of the modified Gram-Schmidt orthogonalization with a non-standard inner product.
These methods, named MGS-HA and MGS-HP, require only $n$ MV, in contrast to the naive implementation, MGS-naive, that requires $2n$ MV.
Experimental results show that both methods are much faster than MGS-naive.
Specifically, MGS-HP is nearly as fast as Cholesky QR for small $n$.
Regarding accuracy, we prove that MGS-HA has nearly the same error bounds for representation error and loss of $A$-orthogonality as MGS-naive.
According to the numerical experiments, MGS-HP shows nearly the same accuracy and MGS-HA shows higher accuracy than MGS-naive.
We also introduce a conjecture on a sharper upper bound for the loss of $A$-orthogonality for MGS-HA (Conjecture 1).
\par
In the future, we expect to prove Conjecture 1 and also derive an upper bound for MGS-HP.
We also plan to evaluate the computational performance of the proposed implementations for large problems in parallel environments.

\section*{Acknowledgment}
The present study is supported in part by Japan Science and Technology Agency, ACT-I (No.~JPMJPR16U6) and the Japanese Ministry of Education, Culture, Sports, Science and Technology, Grant-in-Aid for Scientific Research (Nos.~26286087, 15H02708, 15H02709, 16KT0016).


\begin{thebibliography}{10}

\bibitem{Bjorck:1996}
\r{A}. Bj\"{o}rck,
Numerical Methods for Least Squares Problems,
SIAM, 1996.

\bibitem{Dubrulle:2001}
A. A. Dubrulle,
Retooling the method of block conjugate gradients,
ETNA, {\bf 12} (2001), 216--233.

\bibitem{ELSES}
ELSES matrix library,
\url{http://www.elses.jp/matrix/}.

\bibitem{Essai:1998}
A. Essai,
Weighted FOM and GMRES for solving nonsymmetric linear systems,
Numer. Alg., {\bf 18} (1998), 277--292.

\bibitem{Gulliksson:1995}
M. Gulliksson,
On the modified Gram-Schmidt algorithm for weighted and constrained linear least squares problems,
BIT Numerical Mathematics, {\bf 35} (1995) 453--468.

\bibitem{Higham:2002}
N. J. Higham,
Accuracy and Stability of Numerical Algorithms, 2nd ed.,
SIAM, 2002.

\bibitem{Imakura:2013}
A. Imakura, L. Du, H. Tadano,
A Weighted Block GMRES method for solving linear systems with multiple right-hand sides, 
JSIAM Letters, {\bf 5} (2013), 65--68.

\bibitem{Imakura:2017}
A. Imakura, T. Sakurai,
Block Krylov-type complex moment-based eigensolvers for solving generalized eigenvalue problems, 
Numer. Alg., (accepted).

\bibitem{Knyazev:2001}
A. V. Knyazev,
Toward the optimal preconditioned eigensolver: Locally optimal block preconditioned conjugate gradient method,
SIAM J. Sci. Comput., {\bf 23} (2001), 517--541.

\bibitem{Lowery:2014}
B. R. Lowery, J. Langou,
Stability analysis of QR factorization in an oblique inner product,
arXiv:1401.5171 [math.NA].

\bibitem{Rozloznik:2012}
M.~Rozlo\v{z}n\'{i}k, M.~T\r{u}ma, A.~Smoktunowicz, J.~Kopal,
Numerical stability of orthogonalization methods with a non-standard inner product,
BIT, {\bf 52} (2012), 1035--1058.

\bibitem{Smoktunowicz:2006}
A. Smoktunowicz, J. L. Barlow, J. Langou,
A note on the error analysis of classical Gram-Schmidt,
Numer. Math., {\bf 105} (2006), 299--313.

\bibitem{Stewart:2001}
G. W. Stewart,
Matrix Algorithms Volume II: Eigensysmtems,
SIAM, Philadelphia, 2001.

\bibitem{Trefethen:2009}
L. N. Trefethen,
Householder triangularization of a quasimatrix,
IMA J. Numer. Anal., {\bf 30} (2009), 887--897.

\bibitem{Trefethen:1997}
L. N. Trefethen, D. Bau, III,
Numerical Linear Algebra,
SIAM, Philadelphia, 1997.

\bibitem{Yamamoto:2016}
Y. Yamamoto, Y. Nakatsukasa, Y. Yanagisawa, T. Fukaya,
Roundoff error analysis of the CholeskyQR2 algorithm in an oblique inner product,
JSIAM Letters, {\bf 8} (2016), 5--8.

\bibitem{Yanagisawa:2014}
Y. Yanagisawa, Y. Nakatsukasa, T. Fukaya,
Cholesky-QR and Householder-QR factorizations in nonstandard inner product spaces,
International Workshop on Eigenvalue Problems: Algorithms; Software and Applications, in Petascale Computing (EPASA2014), 2014.

\bibitem{Zhao:2013}
J. Q. Zhao,
S-Orthogonal QR decomposition algorithms on multicore systems,
University of California, Davis, ProQuest Dissertations Publishing, 2013.

\end{thebibliography}
\end{document}